\title[]{Groups definable in two orthogonal sorts}
\author[A.~Berarducci]{Alessandro Berarducci}
\address{Universit\`a di Pisa, Dipartimento di Matematica, Largo Bruno Pontecorvo 5, 56127 Pisa, Italy}
\email{berardu@dm.unipi.it}
\thanks{Partially supported by PRIN 2009WY32E8\_003 ``O-minimalit\`a, teoria degli insiemi, metodi e modelli non standard e applicazioni''.}
\author[M.~Mamino]{Marcello Mamino}
\address{CMAF Universidade de Lisboa\\
Av. Prof. Gama Pinto 2\\
1649-003 Lisboa, Portugal}
\email{mamino@ptmat.fc.ul.pt}
\thanks{Partially supported by Funda\c{c}\~ao para a Ci\^encia e a Tecnologia grant SFRH/BPD/73859/2010. We also acknowledge the support of FIRB2010, ``New advances in the Model Theory of exponentiation''.}
\date{Date: 13 March 2013}
\subjclass[2010]{03C45,03C64,22E99} 
\keywords{Definable groups, stability, o-minimality}
\DeclareMathOperator{\R}{\mathbb R}
\def\N{\mathbb{N}}
\DeclareMathOperator{\Z}{\mathbb Z}
\DeclareMathOperator{\C}{\mathbb C}
\DeclareMathOperator{\acl}{acl}
\DeclareMathOperator{\SU}{SU}
\DeclareMathOperator{\tp}{tp}
\DeclareMathOperator{\dcl}{dcl}
\DeclareMathOperator{\few}{Few}
\DeclareMathOperator{\many}{Most}
\newcommand{\gn}[1]{\ulcorner #1 \urcorner} 
\newcommand{\ov}{\overline}
\def\sZ{Z}
\def\sR{R}
\newcommand{\mC}{\mathfrak C} 
\theoremstyle{plain}
\newtheorem{theorem}{Theorem}
\newtheorem*{theorem*}{Theorem}
\newtheorem*{corollary*}{Corollary}
\newtheorem{lemma}[theorem]{Lemma}
\newtheorem{observation}[theorem]{Observation}
\newtheorem{proposition}[theorem]{Proposition}
\newtheorem{corollary}[theorem]{Corollary}
\newtheorem{claim}{Claim}
\newenvironment{pclaim}[1][{\noindent\textsc{Proof}.}]{\begin{trivlist}
\item[\hskip \labelsep {#1}]}{\hfill \qed$\,_\textrm{claim}$ \end{trivlist}}
\newtheorem{fact}[theorem]{Fact}
\theoremstyle{definition}
\newtheorem{remark}[theorem]{Remark}
\newtheorem{definition}[theorem]{Definition}
\newtheorem{example}[theorem]{Example}
\newtheorem{exercise}[theorem]{Exercise}
\newtheorem{assumption}[theorem]{Assumption}
\numberwithin{theorem}{section}
\newcommand{\bt}{\begin{theorem}}
\newcommand{\et}{\end{theorem}}
\newcommand{\bl}{\begin{lemma}}
\newcommand{\el}{\end{lemma}}
\newcommand{\bfa}{\begin{fact}}
\newcommand{\efa}{\end{fact}}
\newcommand{\bexa}{\begin{example}}
\newcommand{\eexa}{\end{example}}
\newcommand{\bexe}{\begin{exercise}}
\newcommand{\eexe}{\end{exercise}}
\newcommand{\bprop}{\begin{proposition}}
\newcommand{\eprop}{\end{proposition}}
\newcommand{\bp}{\begin{proof}}
\newcommand{\ep}{\end{proof}}
\newcommand{\bc}{\begin{corollary}}
\newcommand{\ec}{\end{corollary}}
\newcommand{\bd}{\begin{definition}}
\newcommand{\ed}{\end{definition}}
\newcommand{\br}{\begin{remark}}
\newcommand{\er}{\end{remark}}
\def\st{\;|\;}
\newenvironment{acknowledgements}{{\bf Acknowledgements.}}{}
\begin{document} 

\begin{abstract} 
This work can be thought as a contribution to the model theory of group extensions. We study the groups $G$ which are interpretable in the disjoint union of two structures (seen as a two-sorted structure). We show that if one of the two structures is superstable of finite Lascar rank and the Lascar rank is definable, then $G$ is an extension of a group internal to the (possibly) unstable sort by a definable subgroup internal to the stable sort. In the final part of the paper we show that if the unstable sort is an o-minimal expansion of the reals, then $G$ has a natural Lie structure and the extension is a topological cover. 
\end{abstract} 

\maketitle

\section{Introduction}
This paper can be thought as a contribution to the model-theory of covers of groups in the spirit of \cite{Zilber2006,Hrushovski2011,Berarducci2010c}.  We assume some familiarity with the basic notions of model theory, but we recall some relevant definitions in \S \ref{orthogonal}. A good recent reference is \cite{Tent2012}. 

Given two structures $\sZ$ and $\sR$, let $(\sZ,\sR)$ be the two-sorted structure with a sort for $\sZ$ and another sort for $\sR$ in a disjoint language (no connections between the two sorts). Note that $Z$ and $R$ are then {\em fully orthogonal} in the following sense: any definable subset of $Z^m\times R^n$ is a finite union of sets of the form $A\times B$ with $A$ a definable subset of $Z^m$ and $B$ ad definable subset of $R^n$.

Our aim is to study the groups $G$ which are interpretable in $(\sZ,\sR)$, or equivalently definable in $(Z,R)^{eq}$ (see \S \ref{orthogonal} for the definitions). Obvious examples are the direct products of groups $H\times K$ with $H$ definable in $Z$ and $K$ definable in $R$. More generally one can have a quotient of $H\times K$ by a finite subgroup. There are however more interesting examples like the following.  

\bexa (\cite{Hrushovski2011})\label{univ}
The universal cover $f: G \to H$ of a real Lie group $H$ definable in an o-minimal expansion  $\sR$ of the real field is interpretable in $((\Z,+), \sR)$.  
\eexa

A few comments are in order. By \cite{Edmundo2007} the universal cover $f: G \to H$ of $H$ can be realized as a locally definable group and admits a definable section $s: H \to G$ (see also \cite{Berarducci2011}). In \cite[\S 8]{Hrushovski2011} it is showed that the bijection $G \to \ker(f) \times H$ induced by the section gives an intepretation of $(G,f,\sR)$ in the two-sorted structure $(\ker (f), \sR)$. 
On the other hand since $\ker(f) \cong \pi_1(H)$ is abelian and finitely generated, $\ker (f)$ is interpretable in $(\Z,+)$ and therefore $f: G \to H$ is interpretable in the two-sorted structure $((\Z,+),\sR)$. In the same way one shows that any cover of $H$ is interpretable in $((\Z,+),\sR)$. (See also  \cite[Prop. 3.1]{Berarducci2010c}.) 

Example \ref{univ} shows that a group $G$ interpretable in $(Z,R)^{eq}$ does not need to arise from a direct product. The next natural question is whether $G$ is always an extension of a group definable in one sort by a group definable in the other sort. We will show that this is indeed the case under a suitable stability assumption on $Z$, but let us first show that in full generality the question has a negative answer. To this aim we take $Z = R = (\R, +, <)$. So we have two structures  $Z$ and $R$ which are at the same time ``equal'' and orthogonal. There is of course no contradiction: indeed strictly speaking in $(Z,R)$ we only have an isomorphic copy of $Z$ and an isomorphic copy of $R$ with the isomorphism not definable in $(Z,R)$. 

\bexa \label{torus} Let $Z = R = (\R, +, <)$. There is a group $G$ definable in $(Z,R)^{eq}$ with no infinite definable subgroup internal to one of the two sorts. So in particular $G$ cannot be a definable extension of a group internal to one sort by a group internal to the other sort. \eexa
 
\bp (Based on \cite[Example 5.2]{Peterzil1999}) Take $G = (Z\times R)/\Lambda$ with $\Z^2 \cong \Lambda < Z \times R$ and $\Lambda$ in sufficiently generic position. Note that $\Lambda$ is not definable. However we can define $G$ in $(Z,R)^{eq}$ taking a definable set $X \subseteq Z\times R$ such that $X + \Lambda = G$ and $X \cap \Lambda$ is finite (a big enough square $X = [0,a] \times [0,a]$ will do) and identifying $G$ with $X/\Gamma$ (where $X/\Lambda$ is the quotient of $X$ by the equivalent relation ``to be in the same coset''). Since we only need a finite portion of $\Lambda$ to define $X/\Lambda$ we obtain a definition in $(Z,R)^{eq}$. This is exactly the example in \cite{Peterzil1999} except that in that paper the authors work with only one sort (which amounts to have the identity map from $Z$ to $R$ at disposal). They prove that in the one-sort setting $G$ has no definable proper infinite subgroups. This holds {\em a fortiori} in the two-sorted setting since we have fewer definable sets.  Thus clearly $G$ has no infinite subgroups internal to one of the two sorts. 
\ep 

In Example \ref{torus} it is important to have the order relation $<$ in the language, so the structures are unstable (in the model-theoretic sense). If we work with the stable structures $Z = R = (\R,+)$ the argument breaks down since in this case we are not able to define $G = (Z\times R)/\Lambda$ in $(Z,R)^{eq}$. We will show that under a suitable stability assumption on the $Z$-sort any group interpretable in $(Z,R)$ is an extension of a group interpretable in $R$ by a group interpretable in $Z$. Let us recall that in a stable theory the $\SU$-rank coincides the $U$-rank or ``Lascar rank'' \cite{Tent2012,Lascar1976}. Our main result is: 

\begin{theorem*}[See \ref{main2}]Let $\sZ$ be a superstable structure of finite $\SU$-rank and assume that the $\SU$-rank is definable. Let $\sR$ be an arbitrary structure. Given a group $(G,\cdot)$ definable in $(\sZ,\sR)^{eq}$, there is a $\sZ$-internal definable normal subgroup $\Gamma\lhd G$ such that $G/\Gamma$ is $\sR$-internal.
\end{theorem*}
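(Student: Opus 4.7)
The strategy is to construct a definable group homomorphism $\rho : G \to H$ onto an $\sR$-internal definable group $H$, so that $\Gamma := \ker\rho$ is the desired $\sZ$-internal normal subgroup. The heuristic is that every $g \in G$ should have a canonical ``$\sR$-shadow'' --- roughly the imaginary coding $\dcl(g) \cap \sR^{eq}$ --- and that the group multiplication descends along these shadows.

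I would begin by lifting full orthogonality to imaginaries. Writing $g = [(\bar z, \bar r)]_E$ for some $\emptyset$-definable equivalence relation $E \subseteq (\sZ^m \times \sR^n)^2$, full orthogonality forces $E$ itself to be a finite union of rectangles, so the class $[g]_E$ is a finite union of sets of the form $A_i \times B_i$ with $A_i\subseteq \sZ^m$ and $B_i \subseteq \sR^n$ definable (with parameters depending on $g$). Definability of the $\SU$-rank lets me pick out, uniformly in $g$, the generic $\sZ$-slice of this class, and the $\sR$-residue of that slice provides $\rho(g) \in \sR^{eq}$ as a definable function of $g$. Symmetrically I build a $\sZ$-reduct $\zeta(g)\in \sZ^{eq}$, and $g$ ends up interalgebraic with $(\zeta(g),\rho(g))$.

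Next I would upgrade $\rho$ to a generic partial homomorphism. For independent generic $g_1,g_2\in G$, orthogonality gives $\zeta(g_1 g_2)\in\acl(\zeta(g_1),\zeta(g_2))$, and since $g_i \in \acl(\zeta(g_i),\rho(g_i))$ one deduces $\rho(g_1g_2)\in \dcl(\rho(g_1),\rho(g_2))$ generically. A Weil--Hrushovski group chunk argument then produces a definable $\sR$-internal group $H$ and extends $\rho$ to a definable homomorphism $\rho : G \to H$ defined everywhere; the factorisation via $(\zeta,\rho)$ makes $\Gamma=\ker\rho$ $\sZ$-internal, since elements of $\Gamma$ have trivial $\sR$-shadow.

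The main obstacle lies in making the $\sR$-shadow rigorous and functorial for the group operation. This is where both hypotheses on $\sZ$ bite: \emph{definability} of the $\SU$-rank is what makes the generic $\sZ$-slice of an $\emptyset$-definable class a definable object, yielding $\rho$ as a definable function; \emph{finiteness} of the rank supports the descending-rank induction needed for the group chunk construction to close up. A secondary nuisance is that generically $\rho(g_1g_2)$ is at first only $\acl$-definable over $\rho(g_1),\rho(g_2)$, so some bookkeeping (possibly quotienting $G$ by a finite central subgroup, or passing to a finite cover) will be required to replace the finitely-multivalued partial map by a genuine definable homomorphism and thereby obtain an honest $\Gamma \lhd G$.
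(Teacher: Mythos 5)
Your plan is genuinely different from the paper's, and unfortunately it has gaps that, as stated, prevent it from closing. The paper does not construct a homomorphism $\rho:G\to H$ at all; it defines $\Gamma$ directly, by a formula involving a ``for most'' quantifier
\[
\Gamma = \bigl\{g\in G : (\many y)(\many x)\, \bigl(R(xg^y)=R(g^yx)=R(x)\bigr)\bigr\},
\]
verifies directly that $\Gamma$ is a normal subgroup, proves it $\sZ$-internal by a compactness/saturation argument, and then shows $\dim(G/\Gamma)=0$ via a non-forking-extension-over-$\acl(Z_0\cup R)$ trick (Corollary~\ref{strong-ext}). The group $H$ is then just $G/\Gamma$ --- no group-chunk construction is needed.

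The first gap in your plan is the group-chunk step itself. Weil--Hrushovski-type group-chunk theorems require tameness of the ambient theory (stability, simplicity, or o-minimality); here the putative chunk lives in $\sR^{eq}$, and $\sR$ is completely arbitrary. There is no version of the group-chunk theorem that applies here, so ``produces a definable $\sR$-internal group $H$'' is not justified. One might hope to bypass the full theorem because $G$ is already a group and one only needs to extend $\rho$ from generic to everywhere, but that still requires a coherent generic operation on the image, which brings us to the second gap: the passage from $\rho(g_1g_2)\in\acl(\rho(g_1),\rho(g_2))$ to $\rho(g_1g_2)\in\dcl(\rho(g_1),\rho(g_2))$. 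You flag this as ``secondary bookkeeping,'' but it is central. In an arbitrary theory there is no canonical way to collapse a finite $\acl$-set to a point (no coding of finite sets, no elimination of imaginaries in $\sR$), and the ``quotient by a finite central subgroup / pass to a finite cover'' fix changes $G$, whereas the statement asks for a normal subgroup of the given $G$. A related issue: if $\rho$ is not literally the shadow map $g\mapsto g^R$ but some modified version produced by the chunk argument, the claim ``elements of $\Gamma=\ker\rho$ have trivial $\sR$-shadow, hence $\Gamma$ is $\sZ$-internal'' no longer follows from Lemma~\ref{inteq} and would need a separate argument. The paper's approach is designed precisely to avoid all of these difficulties: it never needs $\rho$ to be a function or a homomorphism, only the ``for most'' predicate $R(xg^y)=R(x)$, which is definable thanks to definability of the $\SU$-rank.
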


Note that in any superstable structure $Z$ of $\SU$-rank $1$ (for instance $(\C,+,\cdot)$ or $(\Z,+)$, or $(\R, +)$), the $\SU$-rank is definable (see \cite{Pillay1996}), and therefore $Z$ satisfies the assumption of the theorem. 

The subgroup $\Gamma$ will in general depend on how $G$ sits in the ambient space $(Z,R)^{eq}$ and not only on the definable isomorphism type of $G$. In particular $\Gamma$ is neither the minimal nor the maximal $Z$-internal normal subgroup such that $G/\Gamma$ is $R$-internal. For instance if $G$ is the universal cover of the circle group $\R/\Z$, then $G$ can be naturally interpreted in $((\Z,+), \R)$ by \cite{Hrushovski2011}, but it has no minimal or maximal $Z$-internal normal subgroup. In this example $\Gamma$ is $Z$-internal if and only if $2\Gamma$ is such, and $G/\Gamma$ is $R$-internal if and only if $G/2\Gamma$ is such, so there is no reason to prefer $\Gamma$ over $2\Gamma$.  

The subgroup $\Gamma$ is easier to describe if $G$ is definable in $(Z,R)$ rather than $(Z,R)^{eq}$. In this case we have $G \subseteq Z^m \times R^n$ for some $m,n\in \N$ and we can consider the projection $\pi_R : Z^m \times R^n \to R^n$. We then define: 
\[
\begin{array}{lll}
\Gamma & = & \left\{g\in G :  (\many y) (\many x) \big(\pi_R(xg^y) = \pi_R(g^yx) = \pi_R(x)\big)\right\}
\end{array}
\]
where $g^y = ygy^{-1}$ and $(\many y) \phi(y)$ means that the projection on $Z^m$ of the set of $y\in G$ such that $\phi(y)$ fails has lower $\SU$-rank than the projection of the whole of $G$. 

The definition of $\Gamma$ when $G$ is definable in $(Z,R)^{eq}$ is similar, but we need to redefine $\pi_R$ to give meaning to the formula. To do this we will first show that there is a finite-to-one function $f$ from $(Z,R)^{eq}$ to $Z^{eq} \times R^{eq}$ (uniform in each sort). Composing with the projection from $Z^{eq} \times R^{eq}$ to $R^{eq}$ we obtain the desired substitute for $\pi_R$ and the same definition of $\Gamma$ will then work. 
 
In \S \ref{o-minimal} we prove: 

\begin{theorem*}[See \ref{c0}] If $Z$ is an arbitrary structure and $R$ is o-minimal, then every group $G$ definable in $(Z,R)$ admits a unique ``$t$-topology'' in analogy with the o-minimal case (treated in \cite{Pillay1988}). 
\end{theorem*}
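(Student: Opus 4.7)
The plan is to adapt Pillay's 1988 construction of the $t$-topology to our two-sorted setting, exploiting orthogonality to show that the $Z$-sort contributes only discrete topological data while the o-minimal $R$-sort supplies the genuine manifold structure.

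First, I would apply full orthogonality to $G \subseteq Z^m \times R^n$, and also to the graphs of multiplication and inversion, obtaining a finite decomposition $G = \bigsqcup_{i=1}^k A_i \times B_i$ with each $A_i$ definable in $Z$ and each $B_i$ definable in $R$, such that the group operations, restricted to any product of such rectangles, are given by a $Z$-definable case distinction composed with $R$-definable maps on the $R$-coordinates. Refining via o-minimal cell decomposition, we may assume each $B_i$ is a cell, hence a definable manifold with its canonical o-minimal topology. The candidate $t$-topology on $G$ is then defined by giving each $A_i$ the discrete topology, each $B_i$ its o-minimal topology, and $G$ the induced coproduct-of-products topology; equivalently a neighborhood basis at $(z,r) \in A_i \times B_i$ consists of the sets $\{z\} \times U$ with $U$ an o-minimal neighborhood of $r$ in $B_i$.

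Next I would verify that this makes $G$ a topological group. The key lemma is that any definable function from an $R$-definable set to $Z^\ell$ has finite image (its graph splits by orthogonality into rectangles $B \times A \subseteq R^k \times Z^\ell$, and single-valuedness forces each $A$ to be a singleton) and is therefore locally constant on a dense open subset by o-minimal cell decomposition. Applied to the map $(g_1,g_2) \mapsto \pi_Z(g_1 g_2)$ with $\pi_Z(g_1), \pi_Z(g_2)$ fixed, this shows that the rectangle containing $g_1 g_2$ is locally constant in the $R$-coordinates on a dense open subset of each fiber. Combined with the generic continuity of $R$-definable maps in the o-minimal structure, multiplication is continuous at generic points of $G \times G$; Pillay's translation argument then propagates generic continuity to global continuity by writing arbitrary products as compositions of translations with generic products, and simultaneously shows that translations are homeomorphisms.

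For uniqueness, any definable group topology on $G$ restricts to a definable group topology on each $R$-definable fiber $G_z$, which by Pillay's uniqueness in the o-minimal case must agree with the topology of the corresponding $B_i$; in the $Z$-direction, orthogonality prevents any non-discrete topological data from being definable, forcing the $Z$-component to be discrete. The main obstacle I anticipate is compatibility at non-generic points: one must arrange the rectangle decomposition of the multiplication graph to be refined enough, and compatibly with the cell structure of the $B_i$'s, so that the finite-valued function selecting the target rectangle is genuinely locally constant at the points where continuity must be checked, before the translation trick can upgrade generic continuity to continuity everywhere.
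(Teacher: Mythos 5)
Your proposal has a genuine gap at its foundation: the candidate topology you define (coproduct-of-products built from a cell decomposition of the defining formula, with each $A_i$ discrete and each $B_i$ carrying its o-minimal cell topology) is simply not a group topology in general, and no amount of refining the cell decomposition fixes this. This fails already with $Z$ trivial: take $G=([0,1),\oplus)$ with addition mod $1$, definable in $(\R,+,<,\cdot)$. Any cell decomposition gives $[0,1)$ the half-open-interval topology near $0$, but inversion $x\mapsto -x$ sends a neighborhood $[0,\epsilon)$ of $0$ to $\{0\}\cup(1-\epsilon,1)$, which is not open --- the $t$-topology here is the circle topology, which agrees with the ambient one only away from the endpoint. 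The translation argument you invoke cannot rescue this, because it is circular: you need translations to be homeomorphisms of your candidate topology to propagate generic continuity, but translations are not homeomorphisms of the coproduct topology (that is exactly what fails in the example). In Pillay's construction, and in the paper's, the topology is not read off a decomposition; rather one first isolates an $R$-large open set $V$ on which the ambient topology behaves well for translates, and then \emph{defines} $O$ to be $t$-open iff $aOb\cap V$ is open in $V$ for all $a,b\in G$. The translate-invariance is thus built into the definition, not something to be proved afterward for a pre-chosen topology. The cell/manifold structure of $G$ is discovered a posteriori, not imposed at the start.

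Your uniqueness argument also does not go through as stated. You propose to restrict a putative group topology to the fibers $G_z = G\cap(\{z\}\times R^n)$ and invoke Pillay's uniqueness there, but the $G_z$ are not subgroups, so Pillay's uniqueness of the group topology has nothing to say about them. The paper instead phrases uniqueness as: there is a unique \emph{group} topology on $G$ agreeing with the ambient topology on some $R$-large open set; two such topologies then agree on a common $R$-large open set and hence (by translating a neighborhood basis of the identity) agree everywhere.

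What you do get right, and what the paper also uses, is the auxiliary fact that definable maps from $R$-internal sets to $Z$-sorts have finite image (a consequence of orthogonality), and that definable maps are generically continuous in the o-minimal direction. These are exactly Lemma~\ref{finite} and Lemma~\ref{cont} in the paper. But these lemmas are inputs to the Pillay-style construction, not a substitute for it.
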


In particular, if $R$ is based on the reals, then $G$ has a natural Lie group structure. Combining Theorem \ref{c0} and Theorem \ref{main2} we then obtain: 

\begin{corollary*}[See \ref{cover}]
If $R$ is o-minimal and $Z$ is superstable of finite Lascar rank, any group definable in $(Z,R)$ is a cover of a group definable in $R$. 
\end{corollary*}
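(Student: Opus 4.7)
\emph{Plan.} I would combine Theorem~\ref{main2} and Theorem~\ref{c0} in a two-step argument. First, by Theorem~\ref{main2}, $G$ has a $Z$-internal definable normal subgroup $\Gamma\lhd G$ such that $G/\Gamma$ is $R$-internal; by $R$-internality together with the standard transfer of the group law, $G/\Gamma$ is definably isomorphic (over parameters) to a group $H$ definable purely in $R$. Next, by Theorem~\ref{c0}, both $G$ and $G/\Gamma$ carry their canonical $t$-topologies and become topological groups (Lie groups when $R$ is based on the reals), and the quotient homomorphism $\pi\colon G\to G/\Gamma$ is definable, hence continuous for the $t$-topologies.

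The decisive step is to show that $\Gamma$ is discrete in the $t$-topology on $G$. The guiding intuition is that only $R$ contributes topology and dimension: since $Z$ and $R$ are fully orthogonal, every definable map from a $Z$-definable set to an $R$-definable set has finite image, so after composing the $Z$-internal parametrisation of $\Gamma$ with the finite-to-one projection from $(Z,R)^{eq}$ to $R^{eq}$ described just after Theorem~\ref{main2}, the image of $\Gamma$ is finite. Consequently $\Gamma$ has ``$R$-dimension zero'' inside $G$. Unwinding the proof of Theorem~\ref{c0}, where local coordinates for the $t$-topology are produced by a generic decomposition in the $R$-sort as in \cite{Pillay1988}, a definable subset of $R$-dimension zero in $G$ must be topologically nowhere dense; applying homogeneity of the topological group $G$ to the subgroup $\Gamma$ upgrades this to discreteness.

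Once $\Gamma$ is discrete, $\pi\colon G\to G/\Gamma$ is a continuous surjective group homomorphism with discrete kernel in the topological group $G$, hence a topological covering map. Composing with the definable isomorphism $G/\Gamma\cong H$ exhibits $G$ as a cover of the $R$-definable group $H$, which is exactly the conclusion.

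The main obstacle I foresee is the verification, in the second paragraph, that the $t$-topology genuinely ``sees'' $Z$-internal sets as zero-dimensional and that zero-dimensional definable subgroups are discrete. This will require looking inside the construction of the $t$-topology in Theorem~\ref{c0} to confirm that its local charts are entirely controlled by the $R$-sort, and perhaps to rule out pathological accumulation along $Z$-fibres using the superstability of $Z$ together with orthogonality. The rest of the proof is essentially a bookkeeping of standard facts about covering maps and internality.
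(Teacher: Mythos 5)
Your outline correctly begins with Theorem~\ref{main2} and identifies that the core issue is discreteness of $\Gamma$, but there are two places where the argument breaks.

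First, the intermediate claim that $\Gamma$ nowhere dense implies $\Gamma$ discrete ``by homogeneity'' is not a valid topological deduction: the subgroup $\{0\}\times\R$ of $\R^2$ is closed, nowhere dense, and homogeneous, yet not discrete. The correct argument (Remark~\ref{discrete} in the paper) is more structural: the $t$-topology has a basis of $R$-internal open sets, and by Lemma~\ref{RandZ} an $R$-internal set can meet a $Z$-internal set in at most finitely many points, so every basic $t$-neighbourhood of a point of $\Gamma$ contains only finitely many points of $\Gamma$, whence $\Gamma$ is discrete. So the conclusion is right but the homogeneity step in your chain $\dim_R=0 \Rightarrow$ nowhere dense $\Rightarrow$ discrete does not go through.

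Second, and more seriously, the final step ``continuous surjective homomorphism with discrete kernel, hence a covering map'' is not a correct implication for topological groups: the identity from $\R_{\mathrm{discrete}}$ to $\R_{\mathrm{usual}}$ is continuous with trivial kernel but not a covering. What is missing is that $\pi\colon G\to G/\Gamma$ must also be \emph{open}, or equivalently that the $t$-topology on $G/\Gamma$ (which is defined intrinsically from Theorem~\ref{c0}, not as the quotient topology) coincides with the quotient topology coming from $G$. This is precisely the content of Theorem~\ref{quotient} in the paper, which the proposal never invokes. Your proof does not establish continuity of $\pi$ for free either---a definable map between groups with $t$-topologies is not automatically continuous; Theorem~\ref{quotient} is needed for both continuity and openness. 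Without it, the argument that $\pi$ is a local homeomorphism is not complete.
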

Here by ``cover'' we mean a definable morphism which is continuous and open in the t-topology and has a discrete kernel.
Note that there are extensions of groups definable in $\R = (\R,+,\cdot)$ by $(\Z,+)$ that are not covers (see \cite[Theorem 3.12]{Berarducci2010c}). Among all the extensions, only the covers will be definable in $((\Z,+),\R)^{eq}$. The following example may be instructive: 

\bexa (\cite[Theorem 3.12]{Berarducci2010c}) There is an exact sequences $0 \to (\Z,+) \to G \stackrel{f}\to H \to 0$ such that $G \cong (\R,+)$, $H \cong \R/\Z$, and $f(1/n) \in H$ does not converge for $n\to \infty$ (where $1$ is any fixed element of $G$). So in particular we cannot put a compatible topology on $G$ making $f$ into a covering. (Hence the morphism $f: G \to H$ is not interpretable in $((\Z,+), \R)$).  
\eexa

The paper is organized as follows. In \S \ref{orthogonal} we recall some model-theoretic definitions that are needed in the paper: fully orthogonal sets, internal sets, codes and imaginaries. Then we analyze the bearing of these notions for definable sets in $(Z,R)^{eq}$. 
In \S \ref{imaginaries} we define a finite-to-one map from $(Z,R)^{eq}$ to $Z^{eq} \times R^{eq}$ and establish its properties. In \S \ref{dim-types} we assume that $Z$ is superstable of finite $\SU$-rank and we define the dimension of a type in $(Z,R)^{eq}$ as the $\SU$-rank of its projection to $Z^{eq}$ (using the projections studied in \S \ref{imaginaries}). From the existence of non-forking extensions in $Z^{eq}$ we derive a similar results for $(Z,R)^{eq}$ (Proposition \ref{extension}). We will sometime consider types over ``unbounded'' sets of parameters, namely sets of parameters whose size depends on the model. In particular in Corollary \ref{strong-ext} we have a set of parameters including the whole of $R$ and we are nevertheless able to find a suitable realization of the type (this will play a crucial role in the proof of the main theorem). In \S \ref{dim-sets} we define the dimension of definable sets in $(Z,R)^{eq}$ as the maximal dimension of the types of its elements. We can equivalently define the dimension of a definable set in $(Z,R)^{eq}$ as the $\SU$-rank of its ``projection'' to $Z^{eq}$. However in order to prove the invariance of the dimension under definable bijections it is more convenient to use the approach via types (since projections do not commute with bijections). In most of the lemmas we do not really need the stability assumption but only the additivity of the $\SU$-rank (which holds also in supersimple theories of finite $\SU$-rank). However superstability is used in Proposition \ref{sameSU} and to prove the density property in Theorem \ref{dim}. In \S \ref{quantifier} we introduce the quantifier ``for most $x$'' based on the dimension on $(Z,R)^{eq}$. This gives us a convenient notation to define the subgroup $\Gamma$ whose existence is asserted in the main theorem. It is important to assume that in $Z$ the $\SU$-rank is definable in order to ensure that the quantifier ``for most $x$'' is a definable operation. Finally in \S \ref{main-section} we prove the main theorem, in \S \ref{conclusion} we derive some consequences, and in \S \ref{o-minimal} we consider the case when $R$ is o-minimal. 

\begin{acknowledgements} The results of this paper were obtained through a long chain of successive generalizations in the course of which the hypothesis were progressively weakened. Part of this process was stimulated by conversations with Anand Pillay on the occasion of the meetings ``Model Theory in Algebra, Analysis and Arithmetic'' (Cetraro, 2-6 July 2012) and ``Model Theory: Groups, Geometry, and Combinatorics'' (Oberwolfach 6-12 Jan. 2013). We also thank Ya'acov Peterzil for his suggestion to study the subgroup $\Gamma \cdot C_G(\Gamma)$ (see Corollary \ref{centr}). Preliminary versions of the results were presented at the ``Konstanz-Naples Model Theory Days'' (Konstanz 6-8 Dec. 2012) and at the Oberwolfach meeting. 
\end{acknowledgements}


\section{Orthogonality and internal sets} \label{orthogonal} 
Given a complete first order theory $T$ we usually denote by $\mC$ the monster model of $T$ (see \cite{Tent2012}). We can think of $\mC$ as a proper class model which is $\kappa$-saturated for every cardinal $\kappa$.  Every {\em small} (i.e. set-sized) models of $T$ can be embedded in $\mC$ as an elementary substructure, so we can assume that all the models that we are interested are elementary substructures of $\mC$. 

Two definable subsets $\sZ$ and $\sR$ of $\mC$ are {\em fully orthogonal} if every definable subset $X$ of $\sZ\times R$ is a finite union $\bigcup_{i\in I} U_i \times V_i$ of ``rectangles'' $U_i\times V_i$ with $U_i$ a definable subset of $X$ and $V_i$ a definable subset of $\sR$. It can be shown that if $Z$ and $R$ are fully orthogonal, then so are $Z^m$ and $R^n$, but to avoid the verification we can as well include this condition in the definition of full orthogonality. 

A definable set $X$ in $\mC$ is {\em stably embedded} if every subset of $X$ definable with parameters from $\mC$ is definable with parameters from $X$. In this case we consider $X$ as a structure on its own right with a symbol for each $\emptyset$-definable subset of $X^n$ for any $n$. 

Given two definable sets $X$ and $V$ in $\mC$, $X$ is said to be internal to $V$, or {\em ${V}$-internal}, if $X$ is in the definable closure of $V$ and a finite set of parameters.  Equivalently there is a definable surjection from ${V}^n$ to $X$ for some $n$ (see for instance \cite[Lemma 10.1.4]{Tent2012}). 
We recall that $\mC$ has {\em elimination of imaginaries} if, by definition, every definable set $X$ in $\mC$ has a code, where {\em a code for $X$} is a finite tuple $c$ of elements of $\mC$ such that for every automorphism $f$ of $\mC$ we have that $f$ fixes $c$ if and only if $f$ fixes $X$ setwise. All the codes for the same set $X$ are interdefinable and $X$ is definable over any of its codes. We follow the common convention of denoting by $\gn{X}$ a code for $X$. 

As usual $\mC^{eq}$ denotes the expansion of $\mC$ with imaginary sorts: for each $n\in \N$ and each $\emptyset$-definable equivalence relation $E$ on $\mC^n$ we have a sort $S_E$ interpreted as $\mC^n/E$ together with the natural projection $\pi_E: \mC^n\to \mC^n/E$. Given $a\in \mC^n$ the $E$-equivalence class of $a$ can be seen in two ways: as a definable subset $[a]_E$ of $\mC^n$, or as an element $a/E$ of $S_E$. The structure $\mC^{eq}$ has elimination of imaginaries. In particular the definable set $[a]_E$ can be coded by the element $a/E\in S_E$. 

The advantage of working in $\mC^{eq}$ is that quotients of definable sets by definable equivalence relations become definable. It follows in particular that a group is interpretable in $\mC$ if and only if it is isomorphic to a group definable in $\mC^{eq}$. 

\begin{assumption}\label{sym}
In the rest of the paper we assume that $\sZ$ and $\sR$ are stably embedded and fully orthogonal definable subsets of a monster model $\mathfrak C$. The subsets of $\sZ^m \times R^n$ definable in $\mC$ are exactly (up to a natural identification) the definable sets in the two sorted structure $(\sZ,\sR)$ (with no connections between the two sorts). Unless otherwise stated in the sequel  by ``definable'' we mean definable in $(Z,R)^{eq}$ (possibly with parameters from the monster model).  
\end{assumption}

By symmetry all the results depending only on Assumption \ref{sym} hold with the roles of $Z$ and $R$ interchanged. This applies in particular to the following: 

\bl \label{finite} Let $(X_t )_{t \in Z^m}$ be a definable family of subsets of $R^n$ indexed by $Z^m$. Then $\{X_t : t \in Z^m\}$ is finite. More generally the same holds for a definable family $(X_t)_{t \in Y}$ of subsets of an $R$-internal set $X$ indexed by a $Z$-internal set $Y$ (where $X$ and $Y$ are definable sets in  $(Z,R)^{eq}$). \el
\bp For the first part consider the definable set $X = \{(t,x) : x \in X_t\} \subseteq Z^m \times R^n$. By full orthogonality we can write it as a finite union of definable sets of the form $A \times B$ with $A \subseteq R^n, B \subseteq Z^m$ and the desired result follows at once. For the second part let $f: R^n \to X$ and $g: Z^n \to Y$ be definable surjective maps witnessing internality to the respective sorts. 
By the first part $\{f^{-1}(X_{g(z)}) : z \in Z^m\}$ is finite. So $\{X_t: t \in Y\}$ is also finite. 
\ep

\bprop \label{intr} 
For a definable set $X \subseteq Z^m \times R^n$ the following are equivalent: 
\begin{enumerate}
\item $X$ is $Z$-internal. 
\item The projection of $X$ on the $R$-coordinates is finite. 
\item There is a definable bijection from $X$ to a definable subset of $Z^{m+1}$.  
\end{enumerate}
\eprop 
\bp
(1) implies (2): Suppose $X$ is $Z$-internal. Then there is a definable surjective map $f: Z^k \to X$ for some $k$. 
Composing with the projection $\pi_R: Z^m \times R^n \to R^n$ we obtain a map $f$ from $Z^k$ to $R^k$ whose image is finite by Lemma \ref{finite}. On the other hand the image of $f$ coincides with  the projection of $X$ onto the $R$-coordinates.

(2) implies (3): Suppose $X \subseteq Z^m \times F$ where $F \subseteq R^n$ is finite. Fix a bijection $f$ from $F$ to a finite subset of $Z$. Then $f$ induces a bijection from $X$ to a definable subset of $Z^{m+1}$. 

(3) implies (1): Assume (3). Then clearly there is a definable surjective map from $Z^{m+1}$ to $X$, so $X$ is $Z$-internal. 
\ep 

\bl \label{RandZ} Let $X$ be a definable set in $(Z,R)^{eq}$ which is both $Z$-internal and $R$-internal. Then $X$ is finite. \el
\bp By the hypothesis there are $m,n\in \N$ and definable surjective functions $f: Z^m \to X$ and $g: R^n \to X$. For $x\in Z^m$ let $H(x) = g^{-1}(f(x)) \subseteq R^n$. By Lemma \ref{finite} the family of sets $\{H(x): x \in Z^m\}$ is finite. Since distinct elements of $X$ have disjoint preimages through $g$, it follows that $X$ is finite. 
\ep

Let us recall that, given a set $A$ of parameters in some structure, the {\em definable closure} $\dcl(A)$ is the set of points which are definable over $A$ and the {\em algebraic closure} $\acl(A)$ is the set of points which belong to some finite set definable over $A$. Clearly $\acl(A) \supseteq \dcl(A)$.  
  
\bl \label{box} Let $A$ be a set of parameters from $(Z,R)^{eq}$. We have:   
\begin{enumerate}
\item
Let $X \subseteq Z^m \times R^n$ be definable over $A$. Then we can write $X = \bigcup_{i=1}^k U_i \times V_i$ with $U_i \subseteq \sZ^m$ definable over $\acl(A) \cap Z^{eq}$ and $B_i \subseteq \sR^m$ definable over $\acl(A) \cap R^{eq}$.
In particular $X$ is definable over $(\acl(A)\cap Z^{eq}) \cup (\acl(A) \cap R^{eq})$. 

\item Let $a$ be an element of $(Z,R)^{eq}$. Then $a$ is definable over $(\acl(a) \cap Z^{eq}) \cup (\acl(a) \cap R^{eq})$.  

\item Let $X$ be a definable subset of some sort of $Z^{eq}$ and suppose that $X$ is definable over $A$. Then $X$ is definable over $\dcl(A) \cap Z^{eq}$. 

\item For every set of parameters $A$ from $(Z,R)^{eq}$ we have $\acl(A \cup R) \cap Z^{eq} = \acl(A) \cap Z^{eq}$.  

\item The type of an element of $Z^{eq}$ over $A\cup R$, is implied by its type over $\acl(A) \cap Z^{eq}$. 

\item Given a tuple $b$ from $(Z,R)^{eq}$, $\tp(b/R)$ is implied by $\tp(b/\dcl(b)\cap R^{eq})$. More generally $\tp(b/A \cup R)$ is implied by $\tp(b/A \cup (\dcl(b) \cap R^{eq}))$ . 
 \end{enumerate} 
\el 
Note that in (5) and (6) a type over the big set of parameters $R = R(\mC)$ is implied by a type over a small set of parameters. 
\bp 
(1) By considering the sections $X_r = \{x\in Z^n : (x,r) \in X\}$ with $r\in R^m$ we obtain a finite boolean algebra of definable subsets of $Z^n$. The atoms of this boolean algebra are permuted by any automorphism of the monster model fixing $X$ setwise, so they can be coded by elements in $\acl(A)\cap Z^{eq}$ (the codes can be taken in $Z^{eq}$ by stable embeddedness). It follows that each set in the boolean algebra is definable over $\acl(A) \cap Z^{eq}$. Similarly, considering the sections over $Z^n$, we obtain a finite boolean algebra of subsets of $R^m$. If $U \subseteq Z^m$ and $V \subseteq R^n$ are atoms of the respective boolean algebras, then $U$ is definable over $\acl(A)\cap Z^{eq}$, $V$ is definable over $\acl(A)\cap R^{eq}$ and $U\times V$ is either contained or disjoint from $X$. The desired result follows. 

(2) Let $X\subseteq Z^m\times R^n$ be the equivalence class corresponding to $a \in (Z,R)^{eq}$ and apply (1). 

(3) By stable embeddedness $X$ is definable with parameters from $Z^{eq}$ so it has a code $\gn{X}$ in $Z^{eq}$. 
On the other hand since $X$ is definable over $A$, we have $\gn{X} \in \dcl(A)$. So $X$ is definable over $\dcl(A) \cap Z^{eq}$. 

(4) Without loss of generality we can assume $A=\emptyset$. Let $b\in \acl(R) \cap Z^{eq}$. Then there is a tuple $r$ from $R$ and an algebraic formula $\phi(x,r)$ such that $\phi(b,r)$ holds. Let $N\in \N$ be the cardinality of $X_r = \{x : \phi(x,r)\}$. We can assume that for all tuples $r'$ from $R$ of the same length as $r$, the set $X_{r'} = \{x: \phi(x,r')\}$ has cardinality at most $N$. Each $X_{r'}$ is a subset of a given sort of $Z^{eq}$ (the sort of $b$), and the family of these sets is indexed by $R^k$ for some $k$ (the length of the tuple $r$). By Lemma \ref{finite} it follows that the family of sets $\{X_{r'}\}_{r' \in R^k}$ is finite, and since each of them is finite, the union $\bigcup_{r'} X_{r'}$ is finite. Now it suffices to observe that this union is $\emptyset$-definable and contains $b$. 

(5) We can assume $A= \emptyset$. Let $e\in Z^{eq}$ and let $\phi(x,r) \in \tp(b/R)$. By point (3) (with $A = \{r\}$) $\phi(x,r)$  is equivalent to a formula $\psi(x)$ with parameters from $\dcl(r) \cap Z^{eq} \subseteq \acl(\emptyset) \cap Z^{eq}$, where the inclusion follows from point (4). 

(6) It suffices to prove the case $A=\emptyset$. Let $\phi(x,r)\in \tp(b/R)$ where $r$ is a tuple from $R$, say $r\in R^k$. 
So $r \in Y:= \{y\in R^k : \phi(b,y)\}$. By stable embeddedness $Y$ can be defined with parameters from $R$, so it has a code $\gn{Y}$ in $R^{eq}$. On the other hand we must also have $\gn{Y}\in \dcl(b)$, so $\phi(b,y)$ is equivalent to a formula $\psi(y)$ definable over $\dcl(b)\cap R^{eq}$. To conclude it suffices to observe that the formula $\forall y(\psi(y)\to \phi(x,y))$ belongs to $\tp(b/\dcl(b)\cap R^{eq})$ and implies $\phi(x,r)$ (take $y=r$).  
\ep

\section{Imaginaries} \label{imaginaries}

In this section we define a finite-to-one function from $(Z,R)^{eq}$ to $Z^{eq} \times R^{eq}$ and study its properties. 

\bl \label{unicode} Given a definable family $(X_t)_{t \in Y}$ of definable susets of $Z^m$ indexed by a definable set $Y$ in $(Z,R)^{eq}$, there is a uniform family $(\gn{X_t})_{t\in Y}$ of codes in the following sense:  
\begin{enumerate}
\item For each $t\in Y$ the set $X_t\subseteq Z^m$ is coded by $\gn{X_t} \in Z^{eq}$ and the function $t\mapsto \gn{X_t}$ from $Y$ to the appropriate sort of $Z^{eq}$ is definable. 
\item For all $t,t' \in Y$ we have $\gn{X_t} = \gn{X_{t'}}$ if and only if $X_t = X_{t'}$. 
\end{enumerate}
\el 
\bp Given $t\in Y$, by stable embeddedness there is a formula $\varphi(-,b)$ with parameters $b$ from $Z$ which defines $X_t$ (where ``$-$'' is the free variable of the formula). By compactness there is a finite collection of formulas $\phi_1, \ldots, \phi_k$ such that for every $t\in Y$ there is $i\leq k$ and a tuple $b$ from $Z$ such that $X_t$ is defined by $\phi_i(-,b)$. Let $b E_i b'$ if and only if $\phi_i(-,b)$ is equivalent to $\phi_i(-,b')$ and let $a = b/E_i \in Z^{eq}$ be the corresponding imaginary element. Define $f(t) = (i,b/E_i)$, where $X_t$ is defined by $\phi_i(-,b)$ and $i$ is minimal such that there exists such a tuple $b$. Identifying the indexes $1,\ldots, k$ with tuples from $Z$ we can consider $f(t)$ as an element of the appropriate sort of $Z^{eq}$ and define $\gn{X_t} = f(t)$. 
\ep 

\begin{definition} \label{aZ} Let $S_E = (Z^m \times R^n)/E$ be a sort of $(Z,R)^{eq}$. For $a\in S_E$ let $\pi_E^{-1}(a) \subseteq Z^m \times R^n$ be the equivalence class corresponding $a$ and let $\pi_Z(\pi_E^{-1}(a)) \subseteq Z^m$ be its projection on $Z^m$. For $a\in S_E$ define $$a^Z = \gn{\pi_Z(\pi_E^{-1}(a))} \in Z^{eq}$$ 
where the codes are chosen uniformly as in Lemma \ref{unicode} (so $a\mapsto a^Z$ is definable). 
Note that when $a\in Z^m \times R^n$, then $a \mapsto a^Z\in Z^m$ is the natural projection. Similarly we define $a^R = \gn{\pi_R(\pi_E^{-1}(a))}\in R^{eq}$.
\end{definition}

\bd \label{XZ} Given a definable set $X$ in $(Z,R)^{eq}$ define $$X^Z=\{a^Z : a\in X\}$$ where $a\mapsto a^Z$ is given by Definition \ref{aZ}. 
Then $X^Z$ is a definable subset of some sort of $Z^{eq}$ and by stable embeddedness it is definable in $Z^{eq}$. Similarly we define $X^R = \{a^R : a\in X\}$. 
\ed 

\begin{lemma} \label{inteq} Let $X$ be a definable set in $(Z,R)^{eq}$. Then $X$ is $R$-internal if and only if $X^Z$ is finite. 
Similary $X$ is $Z$-internal if and only if $X^R$ is finite. 
\end{lemma}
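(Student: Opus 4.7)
The plan is to prove the first equivalence; the second then follows by exchanging the roles of $Z$ and $R$, which is permitted by Assumption \ref{sym}. Write $X \subseteq S_E = (Z^m \times R^n)/E$, so that each $a \in X$ has $a^Z = \gn{\pi_Z(\pi_E^{-1}(a))}$ living in a sort of $Z^{eq}$, and the assignment $a \mapsto a^Z$ is definable by the uniform choice of codes from Lemma \ref{unicode}.

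For the forward direction, I would argue as follows. Assume $X$ is $R$-internal. As the definable image of $X$ under $a \mapsto a^Z$, the set $X^Z$ is definable and $R$-internal. At the same time, $X^Z$ sits inside a sort of $Z^{eq}$; every such sort is the image of some $Z^k$ under a definable quotient map and is therefore $Z$-internal, and so is any of its definable subsets. Thus $X^Z$ is simultaneously $Z$-internal and $R$-internal, and by Lemma \ref{RandZ} it must be finite. I expect the main subtle point in this direction to be recognising that $X^Z$ is automatically $Z$-internal just by virtue of sitting inside a sort of $Z^{eq}$; once that is in hand, Lemma \ref{RandZ} closes the argument in one line.

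For the reverse direction, suppose $X^Z = \{c_1, \ldots, c_\ell\}$ is finite and set $X_i = \{a \in X : a^Z = c_i\}$, so that $X = X_1 \cup \cdots \cup X_\ell$. By the second clause of Lemma \ref{unicode}, constancy of the code $a^Z = c_i$ on $X_i$ forces $\pi_Z(\pi_E^{-1}(a))$ to be the same subset $S_i \subseteq Z^m$ for every $a \in X_i$. Picking any $z_i \in S_i$ from the monster, each $a \in X_i$ is of the form $\pi_E(z_i, r)$ for some $r \in R^n$, whence $X_i \subseteq \pi_E(\{z_i\} \times R^n)$. The latter is a definable image of $R^n$ and therefore $R$-internal; any definable subset of an $R$-internal set is $R$-internal (every element still lies in $\dcl(R \cup A)$ for a finite set of parameters $A$), and a finite union of $R$-internal sets is $R$-internal. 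Hence each $X_i$, and so $X$, is $R$-internal. This direction is essentially a pigeonhole argument once the uniqueness of codes from Lemma \ref{unicode} is exploited to pass from equal codes to equal subsets.
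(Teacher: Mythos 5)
Your proof is correct, and the reverse direction (finite $X^Z$ implies $R$-internal) is essentially identical to the paper's: pick a point $z_i$ from each of the finitely many sets $\pi_Z(\pi_E^{-1}(a))$ and observe $X\subseteq\pi_E(\{z_1,\dots,z_\ell\}\times R^n)$. The forward direction takes a slightly different route: the paper applies Lemma~\ref{finite} directly to the family $\{\pi_Z(\pi_E^{-1}(x)):x\in X\}$, which is indexed by the $R$-internal set $X$ and hence finite, whereas you instead note that $X^Z$ is the definable image of an $R$-internal set (hence $R$-internal) and also sits inside a sort of $Z^{eq}$ (hence $Z$-internal), then invoke Lemma~\ref{RandZ}. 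Since Lemma~\ref{RandZ} is itself proved from Lemma~\ref{finite}, the two arguments are ultimately of the same depth; your version has the small advantage of making the ``caught between two internalities'' phenomenon explicit, while the paper's is one step shorter by bypassing Lemma~\ref{RandZ}. Both the observation that $X^Z$ is $R$-internal as a definable image and that any subset of a $Z^{eq}$ sort is $Z$-internal are sound.
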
 
\bp Let $X \subseteq S_E = (Z^m \times R^n)/E$ and suppose that $X^Z$ is finite. Unraveling the definitions this means that $\{\pi_Z(\pi_E(x))^{-1} : x\in X\}$ is a finite family of non-empty subsets of $Z^m$. So there is a finite subset $b$ of $Z^m$ which meets all these sets. Thus $X \subseteq \pi_E(b \times R^n) \subseteq \dcl(bR)$ and $X$ is $R$-internal. 

Conversely suppose that $X$ is $R$-internal. Then $\{\pi_Z(\pi_E(x))^{-1} : x\in X\}$ is a family of subsets of $Z^m$ indexed by an $R$-internal set, so it must be finite by Lemma \ref{finite}. 
\ep 

\begin{lemma} \label{LaZaR} 
Let $a\in (Z,R)^{eq}$. Then $a$ is algebraic over~$(a^Z, a^R) \in Z^{eq}\times R^{eq}$. 
\end{lemma}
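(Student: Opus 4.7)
The plan is to reduce the statement to a direct application of Lemmas \ref{inteq} and \ref{RandZ}. Say $a \in S_E$ for some imaginary sort $S_E = (Z^m \times R^n)/E$ of $(Z,R)^{eq}$. I would introduce the fiber of the map $a' \mapsto (a'^Z, a'^R)$ above $(a^Z, a^R)$, namely the definable set
\[
F \;=\; \bigl\{a' \in S_E : a'^Z = a^Z \text{ and } a'^R = a^R\bigr\},
\]
which is definable over $(a^Z, a^R)$ and contains $a$. The whole content of the lemma is that $F$ is finite.

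To see this, observe that $F^Z = \{a^Z\}$ by construction, since every $a' \in F$ satisfies $a'^Z = a^Z$. In particular $F^Z$ is finite, so Lemma \ref{inteq} tells us that $F$ is $R$-internal. Symmetrically, $F^R = \{a^R\}$ is finite, so $F$ is also $Z$-internal. Applying Lemma \ref{RandZ}, which says that a definable set internal to both sorts must be finite, we conclude that $F$ is a finite set definable over $(a^Z, a^R)$. Since $a \in F$, this yields $a \in \acl(a^Z, a^R)$.

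Because all the work has already been packaged in the preceding two lemmas, there is no substantive obstacle in the argument. The only thing that requires a moment's thought is spotting that the correct object to consider is the fiber $F$ rather than, say, the equivalence class $\pi_E^{-1}(a) \subseteq Z^m \times R^n$ itself: for the fiber, the definitions immediately give that both $F^Z$ and $F^R$ are singletons, which is precisely the hypothesis needed to invoke Lemmas \ref{inteq} and \ref{RandZ} in succession.
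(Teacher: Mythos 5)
Your proof is correct and follows essentially the same route as the paper: the paper likewise considers the set $\{x : x^Z = a^Z\} \cap \{x : x^R = a^R\}$, invokes Lemma \ref{inteq} to get internality to both sorts, and concludes finiteness by Lemma \ref{RandZ}. You have merely spelled out the (correct) intermediate observation that $F^Z$ and $F^R$ are singletons, which is what licenses the appeal to Lemma \ref{inteq}.
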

\bp By Lemma \ref{inteq} the set 
$\{x\;|\;x^Z=a^Z\}\cap\{x\;|\;x^R=a^R\}$ is internal to both $Z$
and~$R$, and therefore it is finite by Lemma \ref{RandZ} . Since $a$ belongs to this finite set, 
the Lemma is established.
\ep 

\begin{definition} \label{DefAZ} Given $a\in (Z,R)^{eq}$ define $\ov{a}^Z = \acl(a) \cap Z^{eq}$. More generally, 
given a set $A$ of parameters from $(Z,R)^{eq}$, define ${\ov A}^Z=\acl(A) \cap Z^{eq}$. 
Similarly define $\ov{a}^R$ and ${\ov A}^R$.
\end{definition}

\begin{observation} \label{OaZaR} 
Let $a\in (Z,R)^{eq}$. Then: 
\begin{enumerate}
\item $\acl(a^Z) \cap Z^{eq} = \acl(a) \cap Z^{eq} = \ov{a}^Z$. 
\item $a$ is definable over ${\ov a}^Z\cup{\ov a}^R$.
\end{enumerate}
\end{observation}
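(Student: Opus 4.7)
Part (2) will be essentially immediate from Lemma \ref{box}(2): that lemma states exactly that $a$ is definable over $(\acl(a)\cap Z^{eq})\cup(\acl(a)\cap R^{eq})$, and by Definition \ref{DefAZ} this is $\ov a^Z\cup\ov a^R$. So the real content is Part (1).

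For Part (1), the second equality $\acl(a)\cap Z^{eq} = \ov a^Z$ holds by definition. For the first equality, the inclusion $\acl(a^Z)\cap Z^{eq}\subseteq \acl(a)\cap Z^{eq}$ is automatic, because $a^Z\in\dcl(a)$ (the map $x\mapsto x^Z$ is definable by the uniform choice of codes in Lemma \ref{unicode}), so $\acl(a^Z)\subseteq \acl(a)$.

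The non-trivial direction is $\acl(a)\cap Z^{eq}\subseteq \acl(a^Z)$. The plan is to factor $a$ through the pair $(a^Z,a^R)$ using Lemma \ref{LaZaR}, which gives $a\in\acl(a^Z,a^R)$, hence any $b\in\acl(a)\cap Z^{eq}$ also lies in $\acl(a^Z,a^R)\cap Z^{eq}$. Now I want to absorb the $R^{eq}$-parameter $a^R$ into $R$ and then invoke Lemma \ref{box}(4). Since $a^R$ is an equivalence class of a tuple from $R$, it is definable from that tuple, so $a^R\in\dcl(R)$; thus
\[
b\in \acl(a^Z,a^R)\cap Z^{eq}\subseteq \acl(\{a^Z\}\cup R)\cap Z^{eq}.
\]
Finally Lemma \ref{box}(4) applied with $A=\{a^Z\}$ gives $\acl(\{a^Z\}\cup R)\cap Z^{eq}=\acl(a^Z)\cap Z^{eq}$, concluding $b\in\acl(a^Z)\cap Z^{eq}$.

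There is no real obstacle here; the only subtle point to double-check is the passage $a^R\in R^{eq}\Rightarrow a^R\in\dcl(R)$, which is a standard property of imaginaries (every imaginary element over a stably embedded sort is in the definable closure of that sort). Once that is noted, the proof is a one-line chain combining Lemmas \ref{LaZaR} and \ref{box}(4).
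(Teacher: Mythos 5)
Your proof is correct and takes essentially the same route as the paper: Part (2) is Lemma \ref{box}(2) restated, and Part (1) combines Lemma \ref{LaZaR} (to pass from $a$ to the pair $(a^Z,a^R)$) with Lemma \ref{box}(4) (to discard the $R^{eq}$-parameter $a^R$ when computing algebraic closure inside $Z^{eq}$). The only difference is cosmetic: you spell out the two inclusions and the observation $a^R\in\dcl(R)$ explicitly, whereas the paper packages the same facts as the equality $\acl(a)=\acl(a^Za^R)$ followed by the same appeal to Lemma \ref{box}(4).
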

\begin{proof} 
By Lemma \ref{LaZaR} we have $\acl(a) = \acl(a^Za^R)$, and by Lemma \ref{box}(4) the element $a^R\in R^{eq}$ does not contribute to the algebraic closure in $Z^{eq}$, so the first part is established. The second part is Lemma \ref{box}(2) in the new notation. \end{proof}

\section{dimension of types}\label{dim-types}

In the sequel we assume that the theory of $Z$ is superstable of finite $\SU$-rank and that the $\SU$-rank is definable. In $Z$ we have a good notion of dimension given by the $\SU$-rank, and we want to define a dimension on $(Z,R)^{eq}$. We recall the following:

\begin{fact} Let $M$ be a superstable structure of finite $\SU$-rank. Then: 
\begin{enumerate}
\item (Additivity) For every $a,b\in M^{eq}$ and every set $A$ of parameters from $M^{eq}$ we have $\SU(ab/A) = \SU(a/A) + \SU(b/aA)$. 
\item (Definability) If $M$ has $\SU$-rank $1$, then the $\SU$-rank is definable, namely for every definable family $(X_t)_{t \in Y}$ of sets in $M^{eq}$ and every $k\in \N$, the set $\{t\in Y: \dim(X_t) = k\}$ is definable. 
\end{enumerate}
\end{fact}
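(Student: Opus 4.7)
The plan is to deduce both parts from standard results about superstable theories, so my aim is to point to the right reductions rather than redevelop the forking calculus.

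For part (1), the key input is the Lascar inequalities, valid in any simple theory:
\[
\SU(a/A) + \SU(b/aA) \;\le\; \SU(ab/A) \;\le\; \SU(a/A) \oplus \SU(b/aA),
\]
where $\oplus$ denotes the natural (Hessenberg) sum of ordinals. The lower bound follows by concatenating a pair of forking sequences that witness $\SU(a/A)$ and $\SU(b/aA)$; the upper bound is proved by transfinite induction on $\SU(b/aA)$, using that non-forking is preserved after adjoining $a$ to the base. Under the finite-rank hypothesis every rank that appears is a natural number, and for natural numbers $\oplus$ coincides with $+$, so the two inequalities collapse to the desired equality.

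For part (2), I would proceed in three steps. First, reduce to definable families of subsets of $Z^n$: any definable family $(X_t)_{t\in Y}$ of subsets of a sort of $Z^{eq}$ can be replaced, using stable embeddedness and uniform codes as in Lemma \ref{unicode}, by a definable family of subsets of some $Z^n$, with $\dim$ preserved up to a shift that is uniform in~$t$. Second, use additivity (part~(1)) together with $\SU(Z)=1$ to extract a combinatorial description of rank: for $a=(a_1,\ldots,a_n)\in Z^n$, the rank $\SU(a/C)$ equals the maximal cardinality of a $C$-independent subfamily of $\{a_1,\ldots,a_n\}$, because each individual $a_i$ has $\SU$-rank $0$ or $1$. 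Consequently $\dim(X_t)\ge k$ is equivalent to the existence of a $k$-subset $I\subseteq\{1,\ldots,n\}$ such that the projection $\pi_I(X_t)\subseteq Z^k$ contains a tuple whose components are jointly non-algebraic over~$t$; unfolding this recursively produces a Boolean combination of ``infinite fibre'' conditions of bounded recursion depth. Third, invoke the stability of $Z$: the quantifier $\exists^{\infty}$ is eliminable in definable families (equivalently, $1$-types are definable), so each of these infinity conditions is first-order in~$t$, and hence so is the whole Boolean combination.

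The main obstacle I anticipate is the second step: converting an $\SU$-rank inequality into a first-order condition on coordinate projections. The hypothesis $\SU(Z)=1$ is essential here, since it reduces the full forking calculus to a one-dimensional one; without it, independence can involve non-trivial correlations that no finite projection analysis captures. Once the projection criterion is in place, the third step is routine because the recursion depth is bounded by the fixed dimension $n$ of the ambient power, and at each level only a uniform elimination of $\exists^{\infty}$ is required.
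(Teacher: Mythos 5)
The paper itself does not prove this statement: it is a cited Fact, with part~(1) attributed to Lascar's inequalities (\cite[Theorem 8]{Lascar1976}) and part~(2) to \cite[Corollary 5.11]{Pillay1996}. Your account of~(1) --- the two Lascar inequalities $\SU(a/A)+\SU(b/Aa)\le\SU(ab/A)\le\SU(a/A)\oplus\SU(b/Aa)$ collapsing to an equality because $\oplus$ coincides with $+$ on natural numbers --- is exactly the intended content, so there is nothing to add there.

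For~(2), the reduction to coordinate projections via the $\acl$-pregeometry of a rank-$1$ home sort is a sensible route, but your final step contains a genuine gap. You invoke ``the stability of $Z$: the quantifier $\exists^\infty$ is eliminable in definable families (equivalently, $1$-types are definable).'' These two properties are not equivalent: definability of types holds in \emph{every} stable theory, whereas elimination of $\exists^\infty$ does not, and for a one-variable family $(\phi(x,t))_{t}$ the statement that $\{t:\phi(x,t)\ \text{is infinite}\}$ is definable \emph{is} the definability of $\SU$-rank for that family --- i.e.\ the base case of~(2) itself. So your third step silently presupposes the conclusion at precisely the hard point. What actually makes a superstable theory of $\SU$-rank $1$ eliminate $\exists^\infty$ uses superstability together with the rank-$1$ hypothesis in an essential way (roughly, the finite and uniformly bounded multiplicity of rank-$1$ types over $\acl^{eq}(\emptyset)$, which is the content of Pillay's Corollary 5.11); this is not a formal consequence of stability, and indeed in higher finite rank the $\SU$-rank need not be definable, which is why the Fact is stated for rank $1$ only. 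The surrounding machinery in your sketch --- projections, bounded recursion depth --- is fine once this missing input is supplied, but as written the argument is circular.
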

Part (1) is well known and follows from Lascar's inequalities \cite[Theorem 8]{Lascar1976}. For part (2) see \cite[Corollary 5.11]{Pillay1996}. 

\begin{definition} Given $a\in (Z,R)^{eq}$ define
$$\dim(a/A) = \SU(a^Z/{\ov A}^Z),$$ where $\ov{A}^Z = \acl(A) \cap Z^{eq}$ (Definition \ref{DefAZ}) and $a^Z$ is as in Definition \ref{aZ}.  
\end{definition}

\begin{lemma}\label{th-additivity-points} Work in $(Z,R)^{eq}$. We have: 
\begin{enumerate} 
\item If $b\in \acl(A)$, then $\dim(b/A) = 0$. 
\item $\dim(ab/A) = \dim(a/A) + \dim(b/aA)$. 
\item If $r \subset R^{eq}$, then $\dim(a/Ar) = \dim(a/A)$. 
\end{enumerate}
\end{lemma}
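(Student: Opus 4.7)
My plan is to reduce each of (1)--(3) to the corresponding statement about $\SU$-rank in $Z^{eq}$, exploiting the projection $a\mapsto a^Z$ of Definition \ref{aZ} together with Observation \ref{OaZaR} and Lemma \ref{box}(4). The common idea is that $\dim$ is by definition an $\SU$-rank computed in $Z^{eq}$, so additivity in $(Z,R)^{eq}$ will be inherited from additivity in $Z^{eq}$, once one (i) relates $(ab)^Z$ to the pair $(a^Z,b^Z)$ and (ii) matches the relevant algebraic closures.

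Part (1) is immediate: if $b\in\acl(A)$, then $b^Z\in\dcl(b)\cap Z^{eq}\subseteq\acl(A)\cap Z^{eq}=\ov{A}^Z$, so $\SU(b^Z/\ov{A}^Z)=0$. Part (3) is almost as short: Lemma \ref{box}(4), applied to the finite tuple $r\subset R^{eq}$ (the statement holds \emph{a fortiori} for a sub-tuple of $R^{eq}$, since adding more parameters can only enlarge the algebraic closure), yields $\ov{Ar}^Z=\ov{A}^Z$, whence $\dim(a/Ar)=\dim(a/A)$.

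The main content is Part (2). First I would check that in $Z^{eq}$ the code $(ab)^Z$ is interdefinable with the pair $(a^Z,b^Z)$. Indeed, if $a$ lives in a sort $S_E=(Z^m\times R^n)/E$ and $b$ in $S_{E'}=(Z^{m'}\times R^{n'})/E'$, then, viewing $(a,b)$ in the natural product sort, its preimage is $\pi_E^{-1}(a)\times\pi_{E'}^{-1}(b)$, whose $Z$-projection is the product $\pi_Z(\pi_E^{-1}(a))\times\pi_Z(\pi_{E'}^{-1}(b))$; the code of this product set determines and is determined by the two factor codes $a^Z$ and $b^Z$. Granting this, additivity of $\SU$-rank in $Z^{eq}$ yields
$$\dim(ab/A)=\SU((ab)^Z/\ov{A}^Z)=\SU(a^Z/\ov{A}^Z)+\SU(b^Z/a^Z\,\ov{A}^Z),$$
the first summand being exactly $\dim(a/A)$.

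It remains to identify $\SU(b^Z/a^Z\,\ov{A}^Z)$ with $\dim(b/aA)=\SU(b^Z/\ov{aA}^Z)$. Since $\SU$-rank depends only on the algebraic closure of the base, this reduces to proving $\acl(a^Z\cup\ov{A}^Z)\cap Z^{eq}=\ov{aA}^Z$. The inclusion $\subseteq$ is clear from $a^Z\in\dcl(a)$. For $\supseteq$, I would invoke Observation \ref{OaZaR}(2) to write $a\in\dcl(a^Z\cup a^R)$, so $\acl(aA)\subseteq\acl(a^ZA\cup a^R)$, and then apply Lemma \ref{box}(4) with base $a^ZA$ and the $R^{eq}$-parameter $a^R$ to discard $a^R$ upon intersecting with $Z^{eq}$. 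I expect this last bookkeeping step---keeping track of which parameters live in $Z^{eq}$ versus $R^{eq}$ when intersecting algebraic closures---to be the only real subtlety of the argument; the rest is a mechanical transcription of additivity of $\SU$ via the projections of Section \ref{imaginaries}.
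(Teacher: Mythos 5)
Your proposal follows the same route as the paper: part (1) is the trivial observation that $b^Z\in\ov{A}^Z$, part (3) boils down to $\ov{Ar}^Z=\ov{A}^Z$ via Lemma \ref{box}(4), and part (2) reduces to additivity of $\SU$-rank in $Z^{eq}$ via the identity $\ov{aA}^Z=\acl(a^Z\ov{A}^Z)\cap Z^{eq}$, which is exactly the observation the paper makes. Your writeup is in fact more explicit than the paper's about why $(ab)^Z$ is interdefinable with $(a^Z,b^Z)$.

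One small bookkeeping point in part (2): the chain you write only establishes $\acl(aA)\cap Z^{eq}\subseteq\acl(a^Z A)\cap Z^{eq}$, whereas what is needed is $\acl(aA)\cap Z^{eq}\subseteq\acl(a^Z\ov{A}^Z)\cap Z^{eq}$. You must also drop $A$ down to $\ov{A}^Z$: apply Observation \ref{OaZaR}(2) elementwise to each $a'\in A$ to write $a'\in\dcl(\ov{a'}^Z\cup\ov{a'}^R)\subseteq\dcl(\ov{A}^Z\cup\ov{A}^R)$, and then invoke Lemma \ref{box}(4) once more to discard $\ov{A}^R$ when intersecting with $Z^{eq}$. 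This is the same trick you already used for $a^R$; it just needs to be iterated. With that supplement the argument is complete.
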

\begin{proof}
(1)  Assume $b\in \acl(A)$. Then $b^Z\in \ov{A}^Z$ and $\dim(b/A) = \SU(b^Z/\ov{A}^Z) = 0$. 

Part (2) follows from the additivity formula for the $\SU$-rank in~$Z^{eq}$,
observing that $\ov{aA}^Z = \acl(a^Z{\ov A}^Z)$, where the algebraic
closure is inside~$Z^{eq}$.

For (3) it suffices to observe that $\ov{Ar}^Z = \ov{A}^Z$. 
\end{proof}

\begin{proposition} \label{extension}
Let $a$ be an element of $(Z,R)^{eq}$ and $A\subseteq B$ be sets of parameters in $(Z,R)^{eq}$. Then
there is~$b\in (Z,R)^{eq}$ such that $\dim(b/B)=\dim(a/A)$ and $\tp(b/A)=\tp(a/A)$. 
\end{proposition}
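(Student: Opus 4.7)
The plan is to reduce the problem to the existence of non-forking extensions in the superstable sort $Z^{eq}$, and then pull the witness back to $(Z,R)^{eq}$ via an automorphism of the monster model, using the fact that $x\mapsto x^Z$ is $\emptyset$-definable.

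First, I would work entirely inside $Z^{eq}$. Consider the type $p := \tp(a^Z/\ov{A}^Z)$. Because $Z$ is superstable, $p$ admits a non-forking extension to the larger base $\ov{B}^Z\supseteq\ov{A}^Z$; fix a realisation $c\in Z^{eq}$ of this extension. By construction
$$\tp(c/\ov{A}^Z)=\tp(a^Z/\ov{A}^Z) \quad\text{and}\quad \SU(c/\ov{B}^Z)=\SU(a^Z/\ov{A}^Z)=\dim(a/A).$$

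Next I would lift the type-equality to the parameter set $A$ itself. Applying Lemma \ref{box}(5) to $c$ and $a^Z$ (both elements of $Z^{eq}$) gives $\tp(c/A\cup R)=\tp(a^Z/A\cup R)$, and in particular $\tp(c/A)=\tp(a^Z/A)$. By strong homogeneity of $\mC$ there is an automorphism $\sigma\in\mathrm{Aut}(\mC/A)$ with $\sigma(a^Z)=c$. Setting $b:=\sigma(a)$, we have $\tp(b/A)=\tp(a/A)$ directly from $\sigma\rest{A}=\mathrm{id}$.

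Finally I would verify the dimension equality. Since $x\mapsto x^Z$ is a $\emptyset$-definable function (Definition \ref{aZ}), it commutes with $\sigma$, so $b^Z=\sigma(a^Z)=c$. Therefore
$$\dim(b/B)=\SU(b^Z/\ov{B}^Z)=\SU(c/\ov{B}^Z)=\SU(a^Z/\ov{A}^Z)=\dim(a/A),$$
which is the desired conclusion.

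The only nontrivial step is the first one, namely producing a non-forking extension of $p$ over $\ov{B}^Z$. This is standard once $Z$ is assumed superstable (so extensions of finite $\SU$-rank types over arbitrary small bases exist). The rest of the argument is a routine use of the bridge lemma \ref{box}(5) between types in $Z^{eq}$ and types over $A\cup R$, together with the fact that the projection $x\mapsto x^Z$ is definable over $\emptyset$ and hence automorphism-equivariant.
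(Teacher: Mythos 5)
Your proof is correct, and it reaches the same intermediate landmark as the paper (a realisation $c$ of a non-forking extension of $\tp(a^Z/\ov{A}^Z)$ to $\ov{B}^Z$, and the bridge Lemma~\ref{box}(5) from $\tp(-/\ov{A}^Z)$ to $\tp(-/A\cup R)$), but it finishes differently. The paper produces $b$ concretely: it invokes Lemma~\ref{LaZaR} to obtain an algebraic formula $\phi(x,a^Z,a^R)$ isolating $\tp(a/a^Za^RA)$, substitutes the non-forking realisation $e$ for $a^Z$ in that formula (justified precisely by the bridge), arranges $\phi$ to force $x^Z=u$ and $x^R=v$, and picks any $b\models\phi(x,e,a^R)$. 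Your version replaces all of that with a single application of strong homogeneity of $\mC$: since $\tp(c/A)=\tp(a^Z/A)$ there is $\sigma\in\mathrm{Aut}(\mC/A)$ with $\sigma(a^Z)=c$, and $b:=\sigma(a)$ inherits both required properties automatically, $\tp(b/A)=\tp(a/A)$ from $\sigma\rest A=\mathrm{id}$ and $b^Z=c$ from equivariance of the $\emptyset$-definable map $x\mapsto x^Z$. This bypasses Lemma~\ref{LaZaR} and the isolating-formula manipulation entirely, which makes the argument slightly shorter and avoids the need to arrange $b^R=a^R$ (which the paper fixes but never uses). The only things worth flagging: you should say explicitly that $A$ is a small parameter set so that strong homogeneity applies (the paper's convention makes this automatic), and the paper's citation of Lemma~\ref{box}(4) at the analogous step is really the content of Lemma~\ref{box}(5), which you cite correctly.
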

\begin{proof}
By Lemma \ref{LaZaR} there is an algebraic formula $\phi(x,a^Z,a^R)$ (possibly with additional parameters from $A$) which isolates the type of $a$ over $a^Za^RA$. In particular for each $\psi(x) \in \tp(a/A)$ we have:  
$$\models \forall x(\phi(x,a^Z,a^R) \to \psi(x)). \eqno(\dagger)$$ 
Let $e\in Z^{eq}$ be a realization of a non-forking extension of $\tp(a^Z/{\ov A}^Z)$ to~${\ov
B}^Z$. This means that $\tp(e/\ov{A}^Z) = \tp(a^Z/\ov{A}^Z)$ and $\SU(e/{\ov B}^Z) = \SU(e/{\ov A}^Z)$.  By Lemma \ref{box}(4) we have $\tp(e/A \cup R) = \tp(a^Z/A \cup R)$. 
It follows that we can replace $a^Z$ with $e$ in $(\dagger)$ and obtain
$$\models \forall x(\phi(x,e,a^R) \to \psi(x)).$$
We have thus proved that $\phi(x,e,a^R)$ isolates $\tp(a/A)$. 
Moreover $\phi(x,e,a^R)$ is an algebraic consistent formula, since this fact is a property in the type of $e$ inherited from $a^Z$. Choose $b\in (Z,R)^{eq}$ such that $\phi(b,e,a^R)$ holds. We can assume that $\phi(x,u,v)$ implies $u=x^Z$ and $v = x^R$, since otherwise we can add these conditions to the formula (using the fact that $x\mapsto x^Z$ and $x\mapsto x^R$ are definable functions). So $b^Z = e$ and $b^R = a^R$.  
To conclude it suffices to observe that $\dim(b/B) = \SU(b^Z/\ov{B}^Z) = \SU(e/{\ov B}^Z) = \SU(e/{\ov A}^Z) = \SU(a^Z/{\ov A}^Z)$, where the last equality follows from the fact that $\tp(e/\ov{A}^Z) = \tp(a^Z/\ov{A}^Z)$. 
\end{proof}

Note that in the above Lemma the sets of parameters $A$ and $B$ must be small (with respect to the monster model $\mC$), so we cannot take $B = Z$ say. We can nevertheless obtain the following corollary, where $Z_0 \prec Z$ is a small model, but $R= R(\mC)$ is interpreted in the monster model.   

\bc \label{strong-ext} Let $a$ be an element of $(Z,R)^{eq}$. Then
there is~$b\in (Z,R)^{eq}$ such that $\dim(b/a)=\dim(a)$ and $\tp(b/\acl(Z_0\cup R))=\tp(a/\acl(Z_0 \cup R))$.
\ec

The idea is to use the fact that the type of $a$ over the big set of parameters $R$ is implied by the type of $a$ over the small set $\ov{a}^R \subset R^{eq}$. The details are as follows. 

\bp By Observation \ref{OaZaR} we have $\ov{a}^R = \acl(a) \cap R^{eq} = \acl(a^R) \cap R^{eq}$. 
By Lemma \ref{extension} (with $A = \ov{a}^R$ and $B = \ov{a}^R \cup a$) there is some $b$ with $\tp(b/\ov{a}^R) = \tp(a/\ov{a}^R)$ and $\dim(b/ \ov{a}^R \cup a) = \dim(a / \ov{a}^R)$. By Lemma \ref{th-additivity-points} the parameters from $R^{eq}$ do not contribute to the dimension, namely $\dim(a /\ov{a}^R) = \dim(a)$ and $\dim(b/ \ov{a}^R \cup a) = \dim(b/a)$. It remains to show that  $\tp(b/\acl(Z_0\cup R))=\tp(a/\acl(Z_0 \cup R))$. We can assume that the parameters from $Z_0$ are named by constants in the language, so we only need to prove worry about $R$. So assume $\phi(x,r)\in \tp(a/R)$, where $r \in R^{eq}$, and let us prove that $\phi(x,r) \in \tp(b/R)$. By Lemma \ref{box}(6) there is a formula $\psi(x)\in \tp(a/\ov{a}^R)$ which implies $\phi(x,r)$. Since $\tp(b/\ov{a}^R) = \tp(a/\ov{a}^R)$ we deduce that $\phi(b,r)$ holds. 
\ep 

\section{Dimension of definable sets}\label{dim-sets}

\begin{definition}\label{def-dim-set} Given a definable set $X$ in $(Z,R)^{eq}$ let
$$\dim(X) = \max_{a\in X} \dim(a/A)$$ where $A$ is any set of parameters
over which~$X$ is defined and $a$ ranges in the monster model. By Proposition \ref{extension} this does not depend on the choice of $A$. 
\end{definition}

\bprop \label{sameSU}
 Let $M$ be a model of a superstable theory of finite $\SU$-rank. Let $X$ be a definable subset of $M^n$ and let $Y$
be an $M$-definable subset of~$X$ of the same $\SU$-rank. Suppose that $X$ is defined using
parameters in a model~$M_0\prec M$. Then $Y$ has a point with coordinates in $\acl(M_0)$. The same holds for definable sets in $M^{eq}$ provided the $\SU$-rank is definable. 
 \eprop 
 \bp Let $m = SU(X) = SU(Y)$.  
By definition $\SU(Y) = \sup \{ SU(b/M) : b \in Y\} = m$.  By our assumption $m$ is a finite ordinal, so the sup is achieved. Choose $b \in Y$ with $\SU(b/M) = m$. 
Then $\SU(b/M_0) \geq SU(b / M) = m$. On the other hand  
$m = SU(X) \geq SU(b /M_0)$. Thus $tp(b/M)$ has the same $\SU$-rank of $tp(b / M_0)$. 
If $X$ is included in one of the real sorts $M^n$, this implies that
$tp(b /M)$ is finitely satisfiable in $M_0$. (Indeed for types over models in a superstable theory, the unique extension with the same $\SU$-rank coincides with the unique non-forking extension, which in turn coincides with the unique unique extension finitely satisfiable in the small model). 
Thus in this case $Y$ has a point with coordinates in $M_0$.

The case when $X$ is included in one of the imaginary sorts $M^n/E$ is easily deduced from the real case assuming the definability of the $\SU$-rank. To see this, let $D_i\subseteq M^n$ be the union of all the $E$-equivalence classes of $\SU$-rank $i$. By definability of the $\SU$-rank, the set $D_i$ is definable and $\SU(X) = \max_i \SU (X \cap \pi_E(D_i))$. So we can reduce to the case when all the $E$-equivalence classes have the same $\SU$-rank. Next observe that if all the equivalence classes have $\SU$-rank $i$, then by additivity of the $\SU$-rank we have $\dim(X) = \dim(\pi_E^{-1}(X)) - i$ and similarly for $Y$. By the real case we deduce that $\pi_E^{-1}(Y)$ intersects $\acl(M_0)^{eq}$, hence so does $Y$. 
 \ep

\br \label{SUeq}
Reasoning as in the last part of the proof, the definability of the $\SU$-rank for definable families in the imaginary sorts of $M^{eq}$ follows (using additivity) from the definability of the $\SU$-rank for definable families in the home sorts $M^n$. 
\er 

\bt \label{dim} Assume that the theory of $Z$ is superstable of finite $\SU$-rank that that the $\SU$-rank is definable. Put in $(Z,R)^{eq}$ the dimension function of Definition \ref{def-dim-set}. We have: 
\begin{enumerate}
\item (Additivity) \label{additivity} Given a definable surjective function $f: X \to Y$ with fibers of constant dimension $k$, we have $\dim(X) = \dim(Y) + k$.  
\item (Monotonicity) \label{monotonicity} $\dim(X\cup Y) = \max(\dim(X), \dim(Y))$. 
 \item \label{base} (Base) $\dim(X) = SU(X^Z)$. 
\item (Dimension zero) \label{dimzero} $\dim(X) = 0$ if and only if $X$ is $\sR$-internal.
\item (Density) \label{density} If $X$ is $\emptyset$-definable and $Y \subseteq X$ is a definable subset of the same dimension, then for every $Z_0 \prec Z$ the intersection of $\acl(Z_0 \cup R)^{eq}$ with $Y$ is non-empty. 
\item (Definability) \label{definability} Given $d\in \N$ and a definable
family~$(X_t : t\in Y)$ defined over~$A$, the set $Y_d :=
\{t\in Y\;|\;\dim(X_t)=d\}$ is definable over~$A$. 
\end{enumerate}
\et

\bp 
(\ref{additivity}) Assume for simplicity that $f: X \to Y$ is $\emptyset$-definable in $(Z,R)^{eq}$ and consider an element $b\in Y$ with $\dim(b) = \dim(Y)$ and an element $a\in f^{-1}(b)$ with $\dim(a/b) = \dim(f^{-1}(b)) = k$. Then 
\[
\begin{array}{llll}
\dim(X) \geq \dim(a) & = & \dim(a b) & \textrm{(by Lemma \ref{th-additivity-points})} \\
		& = 	& \dim(b) + \dim(a/b) & \\
		& = &\dim(Y) + k. & 
\end{array}
\]
Similarly, starting an element $a\in X$ with $\dim(a) = X$ and letting $b= f(a)$ we obtain the opposite inequality. 

(\ref{monotonicity}) is obvious. 

(\ref{base}) This follows from the additivity property applied to the definable function $x \mapsto x^Z$ after observing that for each $a\in X^Z$ the fiber $\{x \in X : x^Z = a\}$ is $R$-internal by Lemma \ref{inteq}, and therefore it has dimension zero by the easy part of (\ref{dimzero}). 
 
(\ref{dimzero})  Assume $\dim(X) = 0$. Then $\SU(X^Z) = 0$. So $X^Z$ is finite and by Lemma \ref{inteq} $X$ is $R$-internal. The converse is clear. 

(\ref{density}) The corresponding property for the $\SU$-rank in $Z^{eq}$ is given by Lemma \ref{sameSU}. Thanks to (\ref{base}) we can reduce to the $Z^{eq}$-case replacing $X$ with $X^Z$ and $Y$ with $Y^Z$. Indeed we only need to observe that if $Y^Z$ meets $\acl(Z_0 \cup R)$ then so does $Y$ (because $y\in \acl(y^Z,y^R)$ by Lemma \ref{LaZaR}). 

(\ref{definability}) Given a definable family $(X_t : t \in Y)$ in $(Z,R)^{eq}$ we have $\dim(X_t) = \SU({X_t}^Z)$, 
so we need to prove that $\SU({X_t}^Z) = d$ is a definable condition on $t$. Since ${X_t}^Z$ is definable in $Z^{eq}$ this is almost given by our assumptions on $Z$. The only problem is that the parameter $t$ does not range in $Z$ but in a sort of $(Z,R)^{eq}$. However, by stable embeddedness, the set $(X_t)^Z$ can be defined by a formula $\phi(x,b)$ with parameters $b$ from $Z$. Moreover by a compactness argument there is a single formula $\phi(x,y)$ and a definable function $t\mapsto b_t \in Z^m$ (for some $m\in \N$) such that for each $t\in Y$ the set ${X_t}^Z$ is defined by $\phi(x,b_t)$. This reduces the definability of $\dim$ in $(Z,R)^{eq}$ to the definability of $\SU$ in $Z^{eq}$. 
\ep 

\section{The quantifier $(\many x\in X)$}\label{quantifier}

\bd 
Given a definable set $X$ in $(\sZ,\sR)^{eq}$ and a formula $\phi(x)$ we define: 
\begin{itemize}
\item $(\few x \in X) \phi(x) \iff \dim(\{x\in X : \phi(x)\}) < \dim(X)$
\item $(\many x \in X) \phi(x) \iff (\few x \in X) \lnot \phi(x)$
\end{itemize}
\ed 

From Theorem \ref{dim} we immediately deduce the following:

\bl \label{inv} Fix $Z_0 \prec Z$. We have:
\begin{enumerate}\itemsep1em
\item $\models (\many x)\phi(x) \land (\many x) \psi(x) \iff (\many x)(\phi(x) \land \psi(x))$. 
\item If $f: X \to X$ is a definable bijection, then \\
$\models (\many x \in X) \phi(x) \iff (\many x \in X) \phi(f(x))$. 
\item Given a formula $\phi(x,y)$ there is a formula $\psi(y)$ such that\\ $\models (\many x \in X)\phi(x,y) \iff \psi(y)$. 
\item Let $\phi(x)$ be a formula, possibly with parameters, and let $X$ be $\emptyset$-definable in $(Z,R)^{eq}$. 
Suppose that for all points $a\in X$ algebraic over $Z_0 \cup R$ we have $\phi(a)$. 
Then $\models (\many x \in X) \phi(x)$.
\end{enumerate}
\el 
\bp We apply Theorem \ref{dim}. Part (1) follows from the additivity of dimension. Part (2) from the monotonicity. Part (3) from the definability. Point (4) from the density property. \ep 

\section{Main theorem}\label{main-section}

We are now ready to prove our main result. 

\bt \label{main2} Let $\sZ$ be a superstable structure of finite $\SU$-rank and assume that the $\SU$-rank is definable. Let $\sR$ be an arbitrary structure. Given a group $(G,\cdot)$ definable in $(\sZ,\sR)^{eq}$, there is a $\sZ$-internal definable normal subgroup $\Gamma\lhd G$ such that $G/\Gamma$ is $\sR$-internal.
\et 
\bp 
Let $S_E = (Z^m \times R^n)/E$ be the sort of $G$. 
Given $x\in G$, let $R(x) = \pi_R(\pi_E^{-1}(x)) \subseteq R^m$ where $\pi_R: Z^m \times R^n \to R^n$ is the projection. Note that $x^R = \gn{R(x)}$ according to Definition \ref{aZ}. 
Define
\[
\begin{array}{lll}
\Gamma & = & \left\{g\in G :  (\many y) (\many x) \big(R(xg^y) = R(g^yx) = R(x)\big)\right\}
\end{array}
\]
where $g^y = ygy^{-1}$ and $(\many -)$ stands for $(\many -\in G)$.

To prove that $\Gamma$ is a subgroup we use the invariance of the quantifier 
$(\many -)$ under bijections and the fact that the quantifier distributes over conjunctions (Lemma \ref{inv}). The details are as follows.  
Let $a,b\in \Gamma$. We need to show that $ab \in \Gamma$. We have:
\[ \begin{array}{lll}
 & b \in \Gamma  \\
\Longrightarrow & (\many y)(\many x) \big(R(xb^y) = R(x)\big) & \\
\Longrightarrow &  (\many y) (\many x)\big(R(xa^yb^y) = R(xa^y)\big) &\mbox{since $x \mapsto xa^y$ is a bijection}   \\
\Longrightarrow & (\many y) (\many x)\big(R(x(ab)^y) = R(xa^y)\big)   &    \end{array}\] 

On the other hand 
\[ \begin{array}{ll}
 & a \in \Gamma  \\
\Longrightarrow & (\many y)(\many x) \big(R(xa^y) = R(x)\big). 
   \end{array}\] 
So combining the two derivations we obtain

\[ \begin{array}{ll}
 & a,b \in \Gamma  \\
\Longrightarrow & (\many y)(\many x) \big(R(x(ab)^y) = R(x)\big)
   \end{array}\] 
and similarly one obtains 
\[ \begin{array}{ll}
 & a,b \in \Gamma  \\
\Longrightarrow & (\many y)(\many x) \big(R(x(ab)^y) = R(x) = R((ab)^y x) \big) 
   \end{array}\] 
witnessing $ab \in \Gamma$. 

Let us now prove that $a\in \Gamma$ implies $a^{-1}\in \Gamma$. (Here is where we need the fact that in the definition of $\Gamma$ we have both $xg^y$ and $g^yx$.) We have:
\[ \begin{array}{ll}
 & a \in \Gamma  \\
\Longrightarrow & (\many y)(\many x) \big(R(x a^y) = R(x)\big)  \\
\Longrightarrow & (\many y) (\many x) \big(R(x) = R(x(a^{-1})^y)\big) 
\end{array}
\]
where in the last implication we used the fact that $x \mapsto x(a^{-1})^y$ is a bijection. 
Similarly we obtain $$\models (\many y) (\many x) \big(R(x) = R((a^{-1})^yx)\big).$$
Together with the previous condition this yields $a^{-1}\in \Gamma$. 

We have thus proved that $\Gamma$ is a subgroup. Let us now check that $\Gamma$ is normal in $G$. To this aim note that $b\in \Gamma$ can be expressed in the form $(\many y)Q(b^y)$ where $Q$ is a suitable formula. If $z\in G$ we want to show that $b^z\in \Gamma$, namely $(\many y)Q((b^z)^y)$ holds. This follows from the fact that $(b^z)^y = b^{(zy)}$ and $y\mapsto zy$ is a definable bijection on $G$. 

We need to prove that $\Gamma$ is $Z$-internal. Without loss of generality  we can work in a $\aleph_1$-saturated model. Suppose for a contradiction that $\Gamma$ is not $Z$-internal. Then there is a countable infinite subset $\{g_i : i \in \omega\}$ of $\Gamma$ such that $R(g_i) \neq R(g_j)$ for $i\neq j$ (Lemma \ref{inteq}). By $\aleph_1$-saturation and the definition of $\Gamma$, there are $x,y\in G$ such that for every $i\in \omega$ we have $R(xg_i^y) = R(x)$. Fix such an $x,y$ and let $f: G \to G$ be the definable bijection $g\mapsto xg^y$. We then have $R(f(g_i)) = R(x)$ for all $i \in \omega$, namely each $g_i$ belongs to the definable set $S := \{g\in G : R(f(g)) = R(x)\}$. This set is in definable bijection with $S':= \{g\in G : R(g) = R(x)\}$ so it is $Z$-internal by Lemma \ref{inteq}. On the other hand, still by Lemma \ref{inteq}, $S$ cannot be $Z$-internal since it contains the infinite sequence $\{g_i: i\in \omega\}$ and $R(g_i) \neq R(g_j)$ for all $i\neq j$. This contradiction shows that $\Gamma$ is $Z$-internal. 

It remain to show that $G/\Gamma$ is $R$-internal, or equivalently (by Theorem \ref{dim}) that $\dim(G/\Gamma) = \dim(G) - \dim(\Gamma) = 0$. We can assume that $G$ is $\emptyset$-definable. Let $a\in G$ be such that $\dim(a) = \dim(G)$. By Corollary \ref{strong-ext} there is $b\in G$ be such that $tp(b/\acl(Z_0 \cup R))=tp(a/\acl(Z_0 \cup R))$ and $\dim(b/a) = \dim(a)$. It follows that $\dim(ab^{-1}) = \dim(G)$ (because $\dim(ab^{-1}) \geq \dim(ab^{-1}/a) = \dim(b^{-1}/a) = \dim(b/a) = \dim(a) = \dim(G)$). We claim that $ab^{-1} \in \Gamma$. Granted this we have $\dim(\Gamma) \geq \dim(ab^{-1})$ and we obtain $\dim(G) = \dim(\Gamma)$ as desired. To prove the claim we reason as follows. Since $a,b\in G$ have the same type over $\acl(Z_0 \cup R)$, for all $x,y\in G \cap \acl(Z_0 \cup R)$ 
we must have
\[R(a^yx) = R(b^yx).\]
Indeed if $R(a^yx) \neq R(b^yx)$, then taking $r\in R$ in the symmetric difference of $R(a^yx)$ and $R(b^yx)$, we obtain a formula with parameters in $r,x,y$ which distinguishes the types of $a$ and $b$. 

By Lemma \ref{inv} this implies 
\[ \models (\many y)(\many x) \big(R(a^yx) = R(b^yx) \big)\]
Hence, by the definable bijection $x \mapsto (b^{-1})^y x$ we also get
\[
 \models (\many x)(\many x) \big(R((ab^{-1})^{y}x) = R(x)\big).
\]
By the same method we obtain
\[
 \models (\many y)(\many x) \big(R(x(ab^{-1})^{y}) = R(x)\big).
\]
namely $ab^{-1}\in\Gamma$.
\ep

\section{Corollaries}\label{conclusion}

In this section we study the subgroup $\Gamma \cdot C_G(\Gamma)$ of $G$ and we show that it coincides with $G$ if $G$ is connected (i.e. it has no definable subgroups of finite index).  We need the following. 

\bl \label{Zsat} Let $f\colon X \to Y$ be a definable surjective function in $(Z,R)^{eq}$ and suppose that $Y$ is $R$-internal. Then there is an $R$-internal definable subset $U$ of $X$ such that $f|U\colon U \to Y$ is surjective. \el 
\begin{proof} We can assume that $X \subseteq Z^m\times R^n$ (if $X$ is a subset of some imaginary sort $(Z^m\times R^n)/E$ we reduce to this case by considering $f \circ \pi_E : Z^m \times R^n \to Y$). For $t\in Z^m$, let $X_t$
denote~$X\cap\{t\}\times R^n$. Then $X_t$ is $R$-internal and by Remark~\ref{finite} the set
$\left\{f(X_t):t\in Z^m\right\}$ is finite. It follows that there is a finite subset
$\Gamma$ of~$Z^m$ such that $\bigcup_{t\in\Gamma} f(X_t) =
\bigcup_{t\in Z^m} f(X_t) = Y$. So we can take $U = \bigcup_{t\in \Gamma} X_t$. 
\end{proof}


\bl \label{internal} Let $1 \to A \to B \stackrel{f}\to C \to 1$ be an exact sequence of definable group homomorphisms in $(Z,R)^{eq}$ and assume that $A$ and $C$ are $R$-internal. Then $B$ is $R$-internal.  \el 

\bp By Lemma \ref{Zsat} there is an $R$-internal subset $U$ of $B$ such that $f|U: U \to C$ is surjective. 
Thus $B = U \cdot \ker(f)$. Since $U$ and $\ker (f)$ are $R$-internal, it follows that $B$ is $R$-internal. 
\ep 

We can now obtain the following corollary from the main theorem. 

\bc \label{centr} Under the hypothesis of Theorem \ref{main2} we have:
\begin{enumerate}
\item $G/C_G(\Gamma)$ is $Z$-internal.
\item $\Gamma\cdot C_G(\Gamma)$ is a subgroup of finite index of $G$. 
\item If $G$ has no non-trivial $Z$-internal quotients (this may be regarded as a notion of connectedness), then $\Gamma$ is included in the center of $G$. 
\item $G/Z(\Gamma)$ is a direct product of an $R$-internal and a $Z$-internal subgroup. 
\end{enumerate}
\ec  

\bp
(1) Consider the action of $G$ on $\Gamma$ by conjugation. This gives a morphism from $G$ to $Aut(\Gamma)$ whose kernel is $C_G(\Gamma)$. Thus $G/C_G(\Gamma)$ can be identified with a definable family of automorphisms of $\Gamma$, and since $\Gamma$ is $Z$-internal it easily follows that $G/C_G(\Gamma)$ is $Z$-internal. 

(2) The group $\frac G {\Gamma \cdot C_G(\Gamma)}$ is both $Z$-internal and $R$-internal (being a quotient of both $G/\Gamma$ and $G/C_G(\Gamma)$), so it must be finite. 

(3) If $G$ has no non-trivial $Z$-internal quotients, then $G/C_G(\Gamma)$ must be trivial (by (1)), so $\Gamma$ is in the center of $G$. 

(4) We have $Z(\Gamma) = \Gamma \cap C_G(\Gamma)$. So we have an isomomorphism $G/Z(\Gamma) \cong G/\Gamma \times G/C_G(\Gamma)$ and point (4) is established. 
\ep 

\section{O-minimal case}\label{o-minimal}
 
In this section we show (Theorem \ref{c0}) that if $R$ is an o-minimal structure and $Z$ is arbitrary, then every group $G$ definable in $(Z,R)$ admits a unique ``$t$-topology'' in analogy with the o-minimal case \cite{Pillay1988}. In particular, if $R$ is based on the reals, then $G$ has a natural structure of a real Lie group. Moreover any $Z$-internal subset of $G$ is discrete in the t-topology. If we additionally assume that $Z$ is superstable of finite Lascar rank we can then apply Theorem \ref{main2} to show that any group $G$ definable in $(Z,R)$ is a cover of a group definable in $R$ (Corollary \ref{cover}). Here by ``cover'' we mean a definable morphism which is also a local homeomorphism in the t-topologies (we do not require $G$ to be connected). 

In \S \ref{dim-sets} we defined a dimension on $(Z,R)^{eq}$ based on the projection on the $Z$-coordinates and using the $\SU$-rank on $Z$. In this section we introduce another dimension based on the o-minimal dimension on $R$. Throughout the section, with the exception of Corollary \ref{cover}, $R$ is o-minimal and $Z$ is arbitrary. 

\bd Given a definable set $X \subseteq Z^m \times R^n$, we define $\dim_R(X)$ as the o-minimal dimension of the projection of $X$ to $R^n$. For $X$ definable in $(Z,R)^{eq}$ we define $\dim_R(X)$ as the o-minimal dimension of $X^R$, where $X^R$ is as in Definition~\ref{XZ}. \ed

\bprop \label{Rdim} Work in $(Z,R)^{eq}$, where $R$ is o-minimal. We have:
\begin{enumerate}
\item (Additivity) Given a definable surjective function $f: X \to Y$ with fibers of constant $R$-dimension $k$, we have $\dim_R(X) = \dim_R(Y) + k$.  
\item (Monotonicity) $\dim_R(X\cup Y) = \max(\dim_R(X), \dim_R(Y))$. 
\item (Dimension zero) $\dim_R(X) = 0$ if and only if $X$ is $Z$-internal. 
\item (Definability) Given $d\in \N$ and a definable
family~$(X_t : t\in Y)$ defined over~$A$, the set $Y_d :=
\{t\in Y\;|\;\dim_R(X_t)=d\}$ is definable over~$A$. 
\end{enumerate}
\eprop
\bp This is similar, {\em mutatis mutandis}, to the proof of the corresponding points in Theorem \ref{dim}, but with the roles of $Z$ and $R$ exchanged. Indeed the relevant points of Theorem \ref{dim} (namely everything with the exception of the ``Density'' property) do not use the stability assumption, but only the fact that we have a good notion of dimension on the relevant sort. So we can carry out exactly the same arguments with the o-minimal dimension instead of the $SU$-rank. \ep 


\bd Given a definable set $X$ and a definable subset $Y \subseteq X$, we say that $Y$ is {\em $R$-large} in $X$ if $\dim_R(X\setminus Y) < \dim_R(X)$ (with the convention that if $\dim_R(X) = 0$ this means that $X = Y$). Equivalently, $Y$ is $R$-large in $X$ if $Y$ intersects every $R$-internal subset of $X$ of maximal dimension into a large set. We say that a point $a\in X$ is $R$-generic over a set of parameters $A$ if $a$ belongs to every $R$-large subset of $X$ defined over $A$. 
\ed
 
\bl \label{Rlarge} Let $G$ be a definable set in $(Z,R)$ with $R$ o-minimal. If $X \subseteq G$ is $R$-large in $G$, then $X$ is generic, namely finitely many translates of $X$ cover $G$. \el
\bp 
Choose a small model $M_0$ over which $G$ is defined. It suffices to prove that $G = G(M_0) \cdot X$. 
By orthogonality $G$ has an $R$-internal definable subset $Y$ of maximal $R$-dimension which is defined over $M_0$ (one of the sections over the $Z$-coordinates). Now if $U$ is $R$-large in $G$, then $U \cap Y$ is large in $Y$, and since $Y$ is $R$-internal,  it contains a tuple from $M_0$. We have thus proved that every $R$-large subset of $G$ contains a point from $M_0$.  
To finish the proof, let $X \subseteq G$ be $R$-large and let us show that $G=G(M_0)\cdot X$. To this aim let $g\in G$. Then $g\cdot X^{-1}$ is $R$-large in~$G$ by the invariance of $\dim_R$ under definable bijections (which follows from Proposition \ref{Rdim}), hence it has a point $\gamma$ from $M_0$. 
It follows that $g\in\gamma\cdot X\subseteq G(M_0)\cdot X$ and since $g\in G$ was arbitrary we obtain $G = G(M_0) \cdot X$.   
\ep 
 
\bd Let $R$ be o-minimal. We put on $R$ the topology generated by the open intervals, and on $Z$ the discrete topology. 
Given a subset $X \subseteq Z^m \times R^n$, we define the {\em ambient topology} on $X$ as the topology it inherits from $Z^m \times R^n$, where the cartesian powers have the product topology. 
\ed

When $X$ is a definable group, in addition to the ambient topology, we will introduce also a group topology on $X$, called the  {\em t-topology}, which will be defined by a suitable modification of the construction in \cite{Pillay1988}. See also \cite{Marikova2007,Wencel2012,Fornasiero2012} for similar results. Our proof is self contained. 

\bl \label{cont} Let $X$ and $Y$ be definable sets in the two-sorted structure $(Z,R)$. 
Let $f\colon X\to Y$ be a definable function. Then the set of continuity
points of~$f$, with respect to the ambient topology, is $R$-large in~$X$.
\el
\bp The result is well known in the o-minimal case and since the notion of $R$-largeness refers to the $R$-internal sets we can readily reduce to that case. \ep 

\begin{theorem}\label{c0}
Let $G$ be a definable group in $(Z,R)$, with $R$ o-minimal and $Z$ arbitrary. Then $G$ has a unique group topology, called the t-topology, which coincides with the ambient topology on an $R$-large open subset $V$ of $G$. 
\end{theorem}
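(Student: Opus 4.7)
The plan is to adapt Pillay's original construction for o-minimally definable groups \cite{Pillay1988} to our two-sorted setting, with $\dim_R$ (Proposition \ref{Rdim}) and $R$-largeness playing the roles that o-minimal dimension and largeness play in the original argument. By full orthogonality and Lemma \ref{finite}, $G\subseteq Z^m\times R^n$ decomposes (after a finite partition) into pieces of the form $\{z\}\times G_z$ with $G_z\subseteq R^n$ definable in $R$, so the ambient topology on $G$ restricts on each piece to the o-minimal topology pulled back from $R^n$; this is what allows the classical interior/closure arguments to run through locally.

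The first step is to construct a good chart. Applying Lemma \ref{cont} to the multiplication $\mu\colon G\times G\to G$ and to the inversion $\iota\colon G\to G$ yields $R$-large sets of continuity points in the ambient topology. Combining these with the additivity and monotonicity of $\dim_R$ (and a Fubini-style reduction to sections over the $Z$-coordinates, where we are back in the o-minimal case), I will produce an $R$-large, ambient-open subset $V\subseteq G$ such that $V=V^{-1}$, the map $\mu$ is ambient-continuous on $V\times V$, and $\iota$ is ambient-continuous on $V$. Openness is secured by passing to the ambient-interior of the continuity set inside $G$, which is still $R$-large because in each section $G_z$ the o-minimal interior of a subset of full o-minimal dimension is again of full o-minimal dimension.

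The second step is the definition of the $t$-topology by gluing translates of $V$: declare $U\subseteq G$ to be $t$-open iff for every $g\in U$ there exist $v\in V$ and an ambient-open $W\subseteq V$ containing $v$ with $gv^{-1}W\subseteq U$. Equivalently, the family $\{aV\}_{a\in G}$ is a basis of coordinate patches, each homeomorphic to $V$ with its ambient topology via left translation. The compatibility of these topologies on an overlap $a_1V\cap a_2V$ reduces to ambient-continuity of the transition $v\mapsto a_2^{-1}a_1v$, which is a restriction of $\mu$ with one argument fixed; arranging this for all relevant partial multiplications requires a further shrinking of $V$ using Proposition \ref{Rdim}. Continuity of $\mu$ and $\iota$ in the $t$-topology on all of $G\times G$ and $G$ then follows by translating into the chart $V\times V$ and using that left translations are $t$-homeomorphisms by construction.

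Uniqueness is obtained from Lemma \ref{Rlarge}: if $\tau$ is any group topology agreeing with the ambient topology on some $R$-large open set $V'$, then on the intersection $V\cap V'$ both $\tau$ and the $t$-topology agree with the ambient topology; since $V\cap V'$ is still $R$-large (its complement has strictly smaller $\dim_R$), finitely many of its group translates cover $G$ by Lemma \ref{Rlarge}, and continuity of translations in both topologies forces them to coincide everywhere. The main obstacle is the construction of $V$: one must intersect several $R$-large sets (the continuity loci of $\mu$, $\iota$, and of enough partial multiplications) while simultaneously maintaining ambient-openness and the symmetry $V=V^{-1}$. Once this chart has been secured, the remainder is a routine transport of Pillay's gluing construction \cite{Pillay1988} to the $\dim_R$ framework.
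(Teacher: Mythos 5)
Your plan correctly identifies the framework (Pillay's construction transported to the $\dim_R$ setting), but there is a genuine gap at the point where you write that arranging ambient continuity of the transition maps ``for all relevant partial multiplications requires a further shrinking of $V$ using Proposition \ref{Rdim}.'' The transition $v\mapsto a_2^{-1}a_1 v$ is a different definable map for each pair $a_1,a_2\in G$, and its set of ambient-continuity points inside $V$ is a different $R$-large subset for each choice. Since $G$ has model-many elements, you would need to intersect model-many $R$-large sets, and such an intersection need not be $R$-large (or even nonempty). No ``further shrinking'' of $V$, uniform in $a_1,a_2$, can achieve what you want. This is exactly the obstacle that Pillay's original argument is designed to avoid, and it is why a naive ``chart-gluing'' definition of the $t$-topology cannot be made to work directly.

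The paper's route sidesteps this via a genericity argument baked into the construction of $V$. Lemma \ref{cont} is applied not to $\mu$ and $\iota$ separately, but to the triple product $\alpha(x,y,z)=xyz$, giving an $R$-large ambient-open $Y\subseteq G\times G\times G$ on which $\alpha$ is continuous. One then sets $V_0$ to be the set of $x$ such that for every $(g_1,g_2)$ that is $R$-generic over $x$, both $(g_1,x,g_2)$ and $(g_1, g_1^{-1}xg_2^{-1},g_2)$ lie in $Y$, and takes $V$ to be the ambient interior of $V_0$. The key claim is then: for \emph{fixed} $a,b\in G$, the set $V\cap aVb$ is open in $V$ and $x\mapsto a^{-1}xb^{-1}$ is ambient-continuous on it. This is proved by choosing, for each point $z$ in question, a pair $(a_1,b_1)$ generic \emph{over} $a,b,z$, writing $a^{-1}=a_2a_1$ and $b^{-1}=b_1b_2$, and factoring the map through two applications of $\alpha$ inside $Y$. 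The generic auxiliary pair ``moves with'' the point $z$, so no uniform shrinking is needed. The paper's definition of $t$-open is correspondingly two-sided ($aOb\cap V$ open in $V$ for all $a,b$), which builds both left and right translation continuity into the definition, and the rest of the argument (coincidence with the ambient topology on $V$, continuity of $\mu$ and $\iota$) then follows by translating to a point of ambient continuity as you expect. To repair your proposal you would need to replace the ``shrink $V$'' step with this pointwise genericity device; otherwise the chart compatibility fails.
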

\begin{proof} By Lemma \ref{cont} there is an $R$-large subset $Y$ of $G \times G \times G$ such that the function $\alpha\colon (x,y,z)\mapsto xyz$ from $G\times G \times G$ to $G$ is continuous on $Y$ (with respect to the ambient topology). Replacing $Y$ with its interior we can assume that $Y$ is open in $G \times G \times G$ (still with respect to the ambient topology). Note that if $(x,y,z)\in G \times G \times G$ is generic, then it belongs to $Y$. 
Let $V_0$ be the set of all $x\in G$ such that for all $(g_1,g_2)\in
G\times G$ which are $R$-generic over $x$ both $(g_1,x,g_2)$ and $(g_1,g_1^{-1}xg_2^{-1},g_2)$ belong to $Y$.
By the definability of dimension $V_0$ is definable.
Moreover $V_0$ contains all $R$-generic elements, so it is
$R$-large in $G$. Let $V$ be the interior of $V_0$ in $G$. Then $V$ is definable, $R$-large, and open in $G$.

\begin{claim} For all $a,b\in G$, $Z := V \cap aVb$ is open
in~$V$, and the function~$f\colon x\mapsto a^{-1}xb^{-1}$ from~$Z$ to~$V$
is continuous.\end{claim}
\begin{pclaim} 
Let $z\in Z$. We will show that there is an open
neighbourhood of $z$ contained in~$Z$ and $f$ is continuous at~$z$. Pick $(a_1,b_1)\in
G\times G$ generic over~$a,b,z$. Write $a^{-1}= a_2a_1$ and $b^{-1}= b_1b_2$.
We write $f$ as the composition of
$x\stackrel{g}\mapsto a_1xb_1 \stackrel{h}\mapsto a^{-1}xb^{-1}$ where
$h\colon\zeta\mapsto a_2\zeta b_2$.
Consider the following subsets of~$Z$
\begin{align*}
Z_0 &:= \left\{\zeta\in {V} \phantom{i}\st(a_1,\zeta,b_1)\in Y\right\}\\
Z_1 &:= \left\{\zeta\in Z_0\st(a_2,a_1\zeta b_1,b_2)\in Y\right\}
\end{align*}
Clearly $Z_0$ is open in $V$, $g$ is continuous on~$Z_0$, and $z\in Z_0$.
From this it follows that $Z_1$ is open in $V$ and $f$ is continuous on~$Z_1$. Moreover $z\in Z_1$
observing that $(a_2,a_1zb_1,b_2) = (a_2,a_2^{-1}a^{-1}zb^{-1}b_2^{-1},b_2)$ and $(a_2,b_2)$ is
generic over~$a,b,z$.
Now, $f|_{Z_1}^{-1}(V)\subseteq Z$ is an open neighbourhood of~$z$, and $f$
is continuous at~$z$. 
\end{pclaim}
We define the t-topology. A subset $O$ of~$G$ is \textit{t-open} iff
for all $a,b\in G$ the subset~$aOb\cap V$ of~$V$ is open in~$V$. 

By the previous claim $V$ is t-open. More generally we have: 
\begin{claim} 
$O\subseteq V$ is open in~$G$ (with respect to the ambient topology) if and only if it is t-open. In other words the t-topology and the ambient topology coincide on $V$. 
\end{claim}
\begin{pclaim}
Clearly a t-open subset of~$V$ is open in~$V$. Conversely suppose that $O \subseteq V$ is open in $V$. We must prove that $aOb\cap V$ is open in $V$. We have $aOb\cap V = a(O\cap a^{-1}Vb^{-1})b = aO'b$ where $O' := O\cap a^{-1}Vb^{-1}$. Note that $O'$ is an open subset of $V$ because it can be written as the intersection of $V\cap a^{-1}Vb^{-1}$ (which is open by the previous claim) and $O$. To prove that $aO'b$ is open it suffices to observe that $aO'b = f^{-1}(O')$ where $f$ is the continuous function of the previous claim. 
\end{pclaim}
Now we prove that the group operation is t-continuous. The group translations are clearly t-continuous. Since $V$ is $R$-large, there are $a,b,c\in V$ such that $ab=c$. By Lemma \ref{cont} the group operation $\mu$ is continuous at $(a,b)$ with respect to the ambient topology, hence also t-continuous (since the two topologies coincide on $V$). To prove t-continuity at another point $(x,y)\in G\times G$, we go from $(x,y)$ to $(a,b)$ by the t-continuous map $(ax^{-1}(\cdot), (\cdot)y^{-1}b)$, then from $(a,b)$ to $ab$ by $\mu$, and finally from $ab$ to $xy$ via $xa^{-1}(\cdot)b^{-1}y$. 

Finally we show that the group inverse $x\mapsto x^{-1}$ is t-continuous. Consider an $R$-generic point $a\in G$. Then $a^{-1}$ is also $R$-generic, so both $a$ and $a^{-1}$ belong to $V$. By Lemma \ref{cont} the group inverse is continuous at $a$, hence t-continuous. To prove continuity at another point $b\in G$, note that $x\mapsto (x b^{-1}a)^{-1}$ is t-continuous at $b$ and $x^{-1} = b^{-1}a(x b^{-1}a)^{-1}$ is obtained by composing with a group translation.  

We have thus proved the existence of a group topology which coincides with the ambient topology on an $R$-large open subsets. Granted the existence, the uniqueness is clear.
\end{proof}

In analogy with the o-minimal case we have: 

\bprop \label{closed} Let $G$ be a definable group
 in $(Z,R)$ with $R$ o-minimal. Every definable subgroup $H$ of $G$ is closed in the t-topology.
\eprop
\bp First note that the t-closure of a definable subset of $G$ is definable. Indeed the closure of a definable set in the ambient space is definable and we can reduce to this case working in an $R$-large subset $V$ of $G$ where the two topologies coincide. So, replacing $H$ with its closure, it suffices to show that a dense subgroup $H$ of $G$ coincides with $G$. So assume that $H$ is dense. By o-minimality of $R$, a dense subset of $V$ has interior. It follows that $H$ has interior and therefore it is t-open in $G$. Being also a dense subgroup, it coincides with $G$. \ep 

We next show that the t-topology of a quotient coincides with the quotient topology. 

\bt \label{quotient} Let $f: G \to H$ be a definable surjective homomorphism of definable groups in $(Z,R)$, with $R$ o-minimal. Then $f$ is continuous and open with respect to the t-topologies. So the t-topology of $H$ can be identified with the quotient topology of $G/\ker(f)$. 
\et
We need the following easy lemma, true in arbitrary o-minimal structures: 
\bl \label{open} Work in an o-minimal structure. Let $f: X\to Y$ be a definable continuous surjective map. 
Then there is an open subset $U$ of $X$ such that $f|U: U \to Y$ is an open map. \el
\bp Let $G(f)$ be the graph of $f$. Then $X$ is definably homeomorphic to $G(f)$. Replacing $X$ with $G(f)$ we can assume that $f$ is a projection on some coordinates. So we must prove that if $Z$ is a definable subset of $X\times Y$ and $\pi: Z \to X$ is the projection on $X$, then there is an open subset $U$ of $Z$ such that $f|U : U \to X$ is an open map. To this aim take a cell decomposition of $Z$ such that the projections of the cells of $Z$   give a cell decomposition of $X$. Let $C$ be a cell of maximal dimension of $X$. Then $\pi^{-1}(C)$ is an open subset of $Z$ and $\pi$ restricted to this set is an open map. \ep 

\bp[Proof of Theorem \ref{quotient}] Consider the restriction of $f$ to an open $R$-internal subset $U$. 
By o-minimality $f|U$ is continuous at some point. Since the group translations are continuous, $f$ is continuous everywhere. 
We prove that $f$ is open. First note that, by the additivity of dimensions, if $Y$ is a definable subset of $H$ then $\dim_R(Y) + \dim_R(\ker(f)) = \dim_R(f^{-1}(Y))$. Considering the complements it follows that the image of an $R$-large subset of $G$ is $R$-large in $H$.
To prove that $f$ is open it suffices to prove that it is open at some point. Consider an open large subset $U$ of $G$. Then $f(U)$ is large in $H$ and therefore it has interior in $H$ (since $H$ has an $R$-large open subset in the t-topology and two $R$-large sets have a non-empty intersection). We can then take a definable subset $V$ of $f(U)$ which is open in $H$ and consider the restriction $f|f^{-1}(V) : f^{-1}(V) \to V$. By Lemma \ref{open} $f|f^{-1}(V)$ is open at some point in the ambient topologies of its domain and image. But the t-topology coincide with the ambient topology on this sets, so $f$ is open at some point in the t-topology. 
\ep 

We can now introduce a notion of connectedness for definable groups in $(Z,R)$. This motivates the parenthetical remark in Corollary \ref{centr}(3). 

\bd Let $G$ be a definable group in $(Z,R)$. We say that $G$ is {\em t-connected} if $G$ satisfies one of the equivalent conditions of Proposition \ref{conn} below. \ed 

\bprop \label{conn} Let $G$ be a definable group in $(Z,R)$, with $R$ o-minimal. The following are equivalent: 
\begin{enumerate}
\item $G$ has no proper clopen definable subsets in the t-topology. 
\item $G$ has no proper open definable subgroups in the t-topology. 
\item $G$ has no definable subgroups $H$ such that $G/H$ is $Z$-internal. 
\item $G$ has no definable subgroups $H$ with $\dim_R(H) = \dim_R(G)$. 
\end{enumerate} 
\eprop
\bp 
Let us first prove the equivalence of (1) and (2). So let $X$ is a clopen definable subset of $G$. It suffices to show that its stabilizer $\textrm{Stab}(X)= \{g: g X \subseteq X\}$ is an open subgroup of $G$ (note that $\textrm{Stab}(X) \neq G$ if $\emptyset \neq X \neq G$). To this aim it suffices to observe that any t-connected open neighborhood of the identity of $G$ must be contained in $\textrm{Stab}(X)$. 

To finish the proof it suffices to prove the equivalence of the following conditions:  
\begin{enumerate}[(i)]
\item $H$ is an open subgroup in the t-topology of $G$. 
\item $\dim_R(H) = \dim_R(G)$. 
\item $G/H$ is $Z$-internal. 
\end{enumerate}
To show that (i) implies (ii), let $V$ be a large subset of $G$ where the ambient topology coincides with the t-topology. If $H<G$ is open, then $H$ intersects $V$ is an open set, so it has maximal $R$-dimension. The implication from (ii) to (i) is easy. The equivalence of (ii) and (iii) follows from the additivity of the $R$-dimension (Proposition \ref{Rdim}). 
\ep 

\br \label{discrete} Every $Z$-internal subset of $G$ is discrete in the t-topology. \er
\bp 
The t-topology has a basis of $R$-internal sets. Since an $R$-internal set can intersect a $Z$-internal set in at most finitely many points, each $Z$-internal subset is discrete.  
\ep

So far in this section $Z$ was an arbitrary structure. For the next corollary we also need a stability assumption to be able to apply Theorem \ref{main2}. 

\bc \label{cover} If $R$ is o-minimal and $Z$ is superstable of finite Lascar rank, then every group $G$ definable in $(Z,R)$ is a ``cover'' of a group definable in $R$, namely there is a definable morphism $f: G \to H$ such that $H$ definable in $R$ and $f$ is a local homeomorphism (equivalently $\ker(f)$ is $Z$-internal). 
\ec
\bp By Theorem \ref{main2} there is a $Z$-internal subgroup $\Gamma$ such that $G/\Gamma$ is interpretable in $R$ (hence,  by \cite{Ramakrishnan2011}, definably isomorphic to a group $H$ definable in $R$). Moreover the morphism $G \to G/\Gamma$ has a discrete kernel (by Remark \ref{discrete}) and it is continuous and open (by Theorem \ref{quotient}), so it is a local homeomorphism. 
\ep 

In the ``classical'' situation when $R$ is an o-minimal expansion of the reals and $Z = (\Z, +)$ it follows that every (connected) group $G$ definable in $((\Z,+),R)$ is a cover, in the classical sense, of a real Lie group definable in $R$. The proof of this result was the intial motivation for our work. 

Let us finish by mentioning some related work on locally definable groups in o-minimal structures. In 
\cite{Eleftheriou2012} it is proved that if $G$ is a locally definable connected abelian group in an o-minimal structure $R$ and $G^{00}$ exists, then $G$ is a cover of a definable group (and therefore it is interpretable in $((\Z,+), R)$ by \cite{Hrushovski2011}). However in the non-abelian case the corresponding result fails \cite[Example 7.1]{Berarducci2012}, while Theorem \ref{cover} holds also in the non-abelian case. In general the class of the locally definable connected groups (even assuming that $G^{00}$ exists) is much larger than the class of groups interpretable in $((\Z,+),R)$. One way to see this is to observe that every group interpretable in $((\Z,+),R)$ has the non-independence property (NIP), while by \cite{Mamino} there are (connected) groups definable in $((\Z,+),(\R,+,\cdot))$ which interpret the ring of integers (or even the real field with a predicate for the integers).

\bibliographystyle{plain}

\end{document}